\def\Z{\mathbb{Z}}
\def\V{\mathcal{V}}
\def\til{\tilde{1}}
\DeclareMathOperator{\dg}{deg}
\DeclareMathOperator{\ls}{span}
\def\1{1}
\newtheorem{thm}{Theorem}[section]
\newtheorem{lem}[thm]{Lemma}
\newtheorem{prop}[thm]{Proposition}
\newtheorem{corollary}[thm]{Corollary}
\theoremstyle{definition}
\newtheorem{defn}[thm]{Definition}
\theoremstyle{remark}
\newtheorem{rmk}[thm]{Remark}
\newtheorem{rmks}[thm]{Remarks}
\newtheorem{example}[thm]{Example}
\numberwithin{equation}{section}
\def\twist{(\Sigma, i,q)}
\def\id{\epsilon}
\def\GADP{\text{gr-ADP}}
\def\GACP{\text{gr-ACP}}
\def\GAQP{\text{gr-AQP}}
\def\AQP{\text{AQP}}
\def\gtwist{(\Sigma, i,q,c_G)}
\def\gtwists{(\Sigma, G, \Gamma)}
\def\nor{N_{\star}(C)}
\def\sig{\Sigma_{\star}}
\def\G{G_{\star}}
\def\csig{c_{\sig}}
\def\cgs{c_{\G}}
\def\cgsi{c_{\G}^{-1}}
\def\cigma{c_{\Sigma}}
\def\csigi{c_{\Sigma}^{-1}}
\def\cg{c_G}
\def\cgi{c_G^{-1}}
\def\Gz{G_\id}
\def\Sz{\Sigma_\id}
\def\TSA{A_R(\G;\sig)}
\def\TS{A_R(G;\Sigma)}
\def\GTSA{A_R^{\mathrm{gr}}(G;\Sigma)}
\def\TSAU{A_R(G^{(0)};q^{-1}(G^{(0)}))}
\def\up{^{\uparrow}}
\DeclareMathOperator{\ID}{id}
\def\ind{i_1}
\def\indd{i_2}
\def\inddd{i_3}
\DeclareMathOperator{\spp}{supp}
\DeclareMathOperator{\spg}{supp_G}
\DeclareMathOperator{\Iso}{Iso}
\begin{document}

\title{On graded quasi-Cartan pairs and twisted Steinberg algebras}

\author[L.O. Clark]{Lisa Orloff Clark}
\address[L.O. Clark]{
School of Mathematics and Statistics\\
Victoria University of Wellington\\
PO Box 600\\
Wellington 6140 New Zealand}
\email{lisa.orloffclark@vuw.ac.nz}

\author[L.D. Naingue]{Lynnel D. Naingue}
\address[L.D. Naingue]{
Department of Mathematics and Statistics\\
College of Science and Mathematics\\
MSU-Iligan Institute of Technology\\
Iligan City, Philippines 9200} \email{lynnel.naingue@g.msuiit.edu.ph}

\author[J.P. Vilela]{Jocelyn P. Vilela}
\address[J.P. Vilela]{
Department of Mathematics and Statistics\\
College of Science and Mathematics\\
MSU-Iligan Institute of Technology\\
Iligan City, Philippines 9200} \email{jocelyn.vilela@g.msuiit.edu.ph}

\begin{abstract}
We generalise recent results about quasi-Cartan, Cartan and diagonal subalgebras by introducing graded versions. We show that there is a correspondence between \emph{graded algebraic quasi-Cartan/ Cartan/ diagonal} pairs and certain graded twisted Steinberg algebras and that the associated graded discrete twist is unique.   Our results include all discrete group algebras, and so are more general than the ungraded version. 
\end{abstract}

\thanks{This research is supported by Marsden Fund grant 21-VUW-156 from the Royal Society of New Zealand. It is part of the PhD thesis of the second-named author, who  is supported by the Philippine Department of Science and Technology - Science Education Institute (DOST-SEI) under the Research Enrichment (Sandwich) Program of the ASTHRDP.  The second-named author thanks the first-named author and Te Herenga Waka -- Victoria University of Wellington for their hospitality during her extended visit. The authors also thank Nathan Brownlowe, Rizalyn Bongcawel and Lyster Rey Cabardo for helpful conversations.
    }

\subjclass[2020]{16W50, 16S99,  22A22.} 
\keywords{Twisted Steinberg algebra, discrete twist, graded ring, ample groupoid}

\maketitle

\section{Introduction}
In this paper, we show that there is a groupoid algebra model for any inclusion of a commutative subalgebra in a graded algebra satisfying ``Cartan-like" hypotheses.  That is, we show that a given graded algebra that contains a ``Cartan-like'' subalgebra is isomorphic to a groupoid algebra and the corresponding groupoid is unique.   
We are motivated by the results of the first author and many coauthors in the paper \cite{ACCCLMRSS} about diagonal, Cartan, and quasi-Cartan subalgebras. These subalgebra inclusions generalise how the diagonal subalgebra sits inside an algebra of square matrices, in order of increasing generality.  We improve on the results of \cite{ACCCLMRSS} by adding a group graded structure.  In particular, if the grading is trivial, we recover the results of \cite{ACCCLMRSS}.  When the grading is nontrivial, our results are more general.  For example, given any discrete group algebra with the trivial inclusion of the subalgebra generated by the group identity, our results recover the group. The groupoid algebra model we use is called a twisted Steinberg algebra.  

The Steinberg algebra associated to an ample groupoid was introduced independently in \cite{Steinberg} and \cite{CFST} and was generalised to include groupoid extensions, that is, discrete twists, in \cite{TSA2022}.  A graded approach to (untwisted) Steinberg algebras first appeared in \cite{CFST} and has proven useful in a variety of settings, most notably in the groupoid recovery results of \cite{AHBS,BiceClark,CR, Steinberg2}.
 Here, we introduce \emph{graded discrete groupoid twists} and their associated graded twisted Steinberg algebras.    We also introduce \emph{graded algebraic diagonal/Cartan/quasi-Cartan pairs} and show that each is isomorphic to a graded twisted Steinberg algebra (\cref{gradediso}), and vice versa (\cref{iff}).
We then show that the groupoid twist associated to one of these graded pairs is unique (\cref{unique}), thus we provide a new groupoid recovery result.

We take motivation from the C*-algebra paper \cite{mot2} where they add a graded structure to C*-algebraic diagonal and Cartan subalgebras in functional analysis \cite{Kumjian, Renault}.  Although our definitions and results parallel theirs, their proofs are highly analytic in key places, so we employ different algebraic techniques here.  Often times, the techniques of \cite{ACCCLMRSS} suffice, but not always. 
For example, the results of both \cite{BCF2024} and \cite{ACCCLMRSS} heavily use the \emph{conditional expectation} to construct the algebra isomorphism.  However, in the graded setting, the domain of the expectation is not the entire algebra, only the homogeneous component of the identity.  In the C*-algebra setting, the authors of \cite{mot2} are able to extend it to the entire algebra using abstract harmonic analysis.  We do some work to get around this, culminating in \cref{gradediso}.

Another significant place where the techniques of neither \cite{BCF2024} nor \cite{ACCCLMRSS} apply is in our groupoid recovery section. This is more subtle, but in the algebraic setting, it comes down to understanding which normalisers satisfy the ``local bisection hypothesis", see \cref{supportedbis}.

The paper is organised as follows. In \cref{prelim}, we begin with preliminaries for graded algebras, the three types of (ungraded) algebraic pairs we study -- diagonal, Cartan and quasi-Cartan and twisted Steinberg algebras. In \cref{gradedpairs}, we incorporate the concept of grading into these three algebraic pairs. 
Then in \cref{gradedtwist}, we define graded discrete $R$-twists and show that their associated twisted Steinberg algebras are graded algebras. We also show how suitable graded twisted Steinberg algebras give rise to these graded algebraic pairs. On the other hand, in \cref{homogrpds}, starting from a pair of graded algebras, we show how to construct a graded discrete twist.  We then show that its associated twisted Steinberg algebra, along with its diagonal subalgebra is isomorphic to the pair of graded algebras we started with. 

In \cref{sectionrecover}, we show that our construction recovers the graded discrete $R$-twist. Finally, in \cref{sectionexample}, we demonstrate that our results are indeed more general than \cite{ACCCLMRSS}, as described above.

\section{Preliminaries}\label{prelim}
\subsection{Commutative graded \texorpdfstring{$R$}{}-algebras without torsion}
Throughout this article: $R$ denotes a commutative unital ring; $R^{\times}$ is the group of units of $R$; $\Gamma$ is a group with identity $\id$; and $A$ is a \textit{$\Gamma$-graded $R$-algebra}, in the sense that there is a family $\{A_\gamma: \gamma \in \Gamma \}$  of additive $R$-submodules of $A$ such that 
\begin{enumerate}[label=(\roman*), font=\normalfont]
    \item $A=\bigoplus_{\gamma\in \Gamma} A_{\gamma}$, and
    \item $A_\gamma \cdot A_\lambda \subseteq A_{\gamma \lambda}$ for all $\gamma,\lambda \in \Gamma$, where $A_{\gamma}A_{\lambda}$ denotes the additive subgroup generated by all products $a_{\gamma}a^{'}_{\lambda}$ with $a_{\gamma} \in A_{\gamma}$ and $a^{'}_{\lambda} \in A_{\lambda}$.
\end{enumerate}
\noindent The set $\bigcup_{\gamma\in \Gamma} A_\gamma$ is called the set of \textit{homogeneous elements} of $A$. An additive submodule $A_\gamma$ is called the \emph{homogeneous component of degree $\gamma$}, or simply the \emph{$\gamma$-component} of $A$ and the nonzero elements of $A_\gamma$ are called \emph{homogeneous of degree $\gamma$}. We write $\deg(a)=\gamma$ if $a\in A_\gamma \setminus \{0\}$. Any nonzero element $a\in A$ has a unique expression as a finite sum between homogeneous elements, $a=\sum a_\gamma$ where $a_\gamma \in A_\gamma$. A \textit{graded homomorphism} of $\Gamma$-graded algebras is a homomorphism $f: A \to B$ such that $f(A_\gamma) \subseteq B_\gamma$.

Note that $A_\id$ is a subalgebra of $A$ and the identity $1_A \in A_{\id}$, if it exists. In what follows, we consider a commutative subalgebra $C\subseteq A_\id$ with idempotents $I(C)$. Like \cite{ACCCLMRSS}, we will require $C$ to satisfy the \textit{without torsion} condition with respect to $R$ in the sense that 
\begin{equation}
    \text{if~} e \in I(C) \setminus \{0\} \text{~and~} t\in R \text{~satisfy~} te=0, \text{~then~} t=0. \tag{WT}\label{WT}
\end{equation}

Note that if $C$ has at least one nonzero idempotent, then \hyperref[WT]{condition \pref{WT}} also implies that $R$ is \textit{indecomposable} in the sense that its only idempotents are $0$ and $1$. 
 Since we exclusively work with $R$-subalgebras $C$ that are generated by their idempotent elements, for us, $R$ is always indecomposable. 
As discussed in \cite{ACCCLMRSS}, if $C$ is spanned as an $R$-module by $I(C)$, then each element $c\in C$ can be written as $c=\sum_{i=1}^n t_ie_i$ with mutually orthogonal idempotents $e_1, \ldots, e_n \in I(C)$ and $t_1, \ldots, t_n \in R$.  The proof uses a sort of inclusion-exclusion argument.   We show details of something similar in our proof in \cref{subhat}\ref{inj}.

In general, for any commutative $R$-subalgebra $C$ in an $R$-algebra $A$, we say that $n\in A$ is a \emph{normaliser} of $C$ in $A$ if there exists $m\in A$ such that 
\begin{equation}\label{normaliser}
    mnm=m, ~ nmn=n, ~ \text{~and~} mCn \cup nCm \subseteq C .
\end{equation}
We denote the collection of normalisers of $C$ in $A$ by $N(C)$ and if we need to specify the ambient algebra $A$, we use the notation $N_A(C)$. The set $I(C)$ forms a set of \emph{local units} for $A$ if for any $\{a_1, \ldots, a_k\} \subseteq A$, there exists $e\in I(C)$ with $ea_i=a_i=a_ie$ for all $i\in \{1,\ldots, k\}$. 
If $I(C)$ does form a set of local units for $A$, then the $N(C)$ is an inverse semigroup by \cite[Lemma~2.15]{ACCCLMRSS}. For each $n\in N(C)$, we write $n^{\dagger}$ for the inverse, which is the unique element $m \in N(C)$ satisfying \cref{normaliser}, see \cite[Lemma~2.14]{ACCCLMRSS}.  As in \cite[Definition~2.18]{ACCCLMRSS}, a normaliser $n\in N(C)$ is a \textit{free normaliser} if either $n \in C$ or $(n^\dagger n)(nn^\dagger)=0$ (this latter condition is equivalent to $n^2=0$). 

We say $n \in A$ is a \textit{homogeneous normaliser} of $C$ if it is both a homogeneous element of $A$ and a normaliser of $C$, that is, 
$n\in N(C) \cap A_{\gamma}$ for some $\gamma \in \Gamma$.  We denote the set of homogeneous normalisers of $C$ by $\nor$.

\subsection{Algebraic Pairs}
In \hyperref[gradedpairs]{\cref{gradedpairs}}, we introduce graded generalisations of the algebraic pairs introduced in \cite{ACCCLMRSS}.  These original ungraded algebraic pairs are defined as follows.  Again we let $R$ be a commutative unital ring, let $A$ be an $R$-algebra, and let $C \subseteq A$ be a commutative subalgebra satisfying \hyperref[WT]{condition \pref{WT}}.  We call a map $P:A \to C$ a \textit{conditional expectation} if $P$ is $R$-linear; $P|_{C} = \ID_C$; and $P(cac') = cP(a)c'$ for $a \in A$ and $c,c' \in C$.
In addition, suppose:
	\begin{enumerate}[label=(\roman*), font=\normalfont]
		\item \label{paircon1}The set $I(C)$ forms a set of local units for $A$;
		\item $C=\ls(I(C))$;
		\item $A=\ls(N(C))$;
		\item \label{paircon4} There exists a \textit{faithful} conditional expectation $P:A \to C$, in the sense that for every $a \in A\setminus \{0\}$, there exists $n \in N(C)$ such that $P(na) \neq 0$. 
	\end{enumerate}
	Then the pair $(A,C)$ is called an
	\begin{enumerate}[label=(\roman*), font=\normalfont]
		\item \textit{algebraic diagonal pair} if $A$ is spanned by the free normalisers of $C$;
		\item \textit{algebraic Cartan pair} if $C$ is a maximal commutative subalgebra of $A$;
		\item \textit{algebraic quasi-Cartan pair} if there is a faithful conditional expectation $P:A \to C$ that is \textit{implemented by idempotents}, in the sense that for every $n\in N(C)$, there exists $e\in I(C)$ such that $P(n)=ne=en$.
	\end{enumerate}
For convenience, we say the pair $(A,C)$ is an ADP [resp. ACP, AQP] instead of algebraic diagonal (resp. Cartan, quasi-Cartan) pair.

\subsection{Groupoids}  A groupoid $G$ is like a group except the operation is only partially defined.  For a more detailed definition, see \cite{Sims}. We write $G^{(2)}$ for the set of composable pairs and write $s$ and $r$ for the functions from $G$ to $G$ such that
	\begin{enumerate}
		\item[(i)] $s(\alpha)=\alpha^{-1}\alpha$,~~~~~ called the \textit{source} of $\alpha \in G$ and
		\item[(ii)] $r(\alpha)=\alpha\alpha^{-1}$,~~~~~ called the \textit{range} of $\alpha \in G$.
	\end{enumerate}
The \textit{unit space} of $G$ is denoted $G^{(0)}:= s(G)    = r(G)$. For each $x\in G^{(0)}$, we define $G^{x}:= r^{-1}(x)$. For any two subsets $U,V$ of $G$, we define
    \begin{equation*}
        UV:= \{\alpha\beta : (\alpha, \beta) \in (U \times V) \cap G^{(2)}\} ~~~~\text{and}~~~~ U^{-1}:= \{\alpha^{-1} : \alpha \in U\}.
    \end{equation*}

    A \textit{topological groupoid} is a groupoid endowed with a topology such that inversion and composition are continuous. An open set $B \subseteq G$ in a topological groupoid is an \textit{open bisection} if $r(B)$ and $s(B)$ are open and $r|_B$ and $s|_B$ are homeomorphisms onto their images. The groupoid $G$ is \textit{\'{e}tale} if $r$ (or equivalently, $s$) is a local homeomorphism. If $G$ is \'{e}tale and Hausdorff,  then $G^{(0)}$ is open and closed in $G$.  Also, $G$ is \'{e}tale if and only if $G$ has a basis of open bisections. We say that $G$ is \textit{ample} if it has a basis of compact open bisections.     For the majority of this paper, the groupoids we work with are ample Hausdorff groupoids, but we will make these assumptions explicit throughout. 

    The \textit{isotropy} of a groupoid $G$ is the set $\Iso(G):= \{ \alpha \in G: r(\alpha)=s(\alpha)\}$. We say that $G$ is \textit{principal} if $\Iso(G)=G^{(0)}$, and that $G$ is \textit{effective} if the topological interior of $\Iso(G)$ is equal to $G^{(0)}$.

    Given groupoids $G$ and $H$, we say that a map $\psi: G \to H$ is a \textit{groupoid homomorphism }if $(\psi \times \psi)(G^{(2)})\subseteq H^{(2)}$ and $\psi(\alpha\beta)=\psi(\alpha)\psi(\beta)$ for all $(\alpha,\beta)\in G^{(2)}$. In addition, if $\psi$ is a homeomorphism, then $\psi$ is called \textit{groupoid isomorphism}.

    Let $\Gamma$ be a discrete group with identity $\id$ and $G$ be a topological groupoid. A \textit{$\Gamma$-grading} of $G$ is a continuous groupoid homomorphism $c: G \to \Gamma$;
such a function $c$ is often called a \textit{cocycle} on $G$. For each $\gamma\in \Gamma$, we write $G_\gamma := c^{-1}(\gamma)$. We say that $X \subseteq G$ is \emph{$\gamma$-graded} if $X\subseteq G_\gamma$, or simply say $X$ is \emph{homogeneous} if such $\gamma$ exists. Clearly, $G^{(0)}$ is $\id$-graded and if $X$ is $\gamma$-graded, then $X^{-1}$ is $\gamma^{-1}$-graded.  

    If $B$ and $D$ are homogeneous compact open bisections of an ample groupoid, then $B^{-1}$ and $BD$ are also homogeneous compact open bisections. Under these operations, the collection of homogeneous compact open bisections is an inverse subsemigroup  of the inverse semigroup of all compact open bisections \cite[Proposition~2.2.4]{Paterson}.

\subsection{Discrete twists}
Let $G$ be an ample groupoid, let $R$ be a commutative unital ring, and let $T$ be a subgroup of $R^{\times}$. A \textit{discrete twist} by $T$ over $G$ is a sequence 
		\[
	\xymatrix@1{
		G^{(0)} \times T \ar[r]^-{i} ~ &~ \Sigma \ar[r]^{q}  & G 
	}
	\]
where the groupoid $G^{(0)} \times T$ is regarded as a trivial group bundle with fibres $T$, $\Sigma$ is a groupoid with $\Sigma^{(0)}=i(G^{(0)} \times \{1\})$, and $i$ and $q$ are continuous groupoid homomorphisms that restrict to homeomorphisms of unit spaces, such that the following conditions hold:
\begin{enumerate}
    \item[(DT1)] The sequence is exact, in the sense that $i(\{x\} \times T)= q^{-1}(x)$ for every $x \in G^{(0)}$, $i$ is injective, and $q$ is surjective. 
    \item[(DT2)] \label{DT2}The groupoid $\Sigma$ is a locally trivial $G$-bundle, in the sense that for each $\alpha\in G$, there is an open bisection $B_\alpha$ of $G$ containing $\alpha$, and a continuous map $P_\alpha: B_\alpha \to \Sigma$ such that
    \begin{enumerate}[label=(\roman*), font=\normalfont] 
        \item \label{nD1}$q \circ P_\alpha = \operatorname{id}_{B_\alpha}$
        \item \label{nD2}the map $(\beta, t) \mapsto i(r(\beta), t)P_\alpha(\beta)$ is a homeomorphism from $B_\alpha \times T$ to $q^{-1}(B_\alpha)$. 
    \end{enumerate}
    \item[(DT3)] The image of $i$ is \textit{central} in $\Sigma$, in the sense that $i(r(\sigma), t)\sigma= \sigma i(s(\sigma),t)$ for all $\sigma\in \Sigma$ and $t \in T$.
\end{enumerate}

We denote a discrete twist over $G$ by $(\Sigma,i,q)$, or by $\Sigma \to G$, or simply by $\Sigma$.  
We identify $\Sigma^{(0)}$ with $G^{(0)}$ via $i$ (or via $q|_{\Sigma^{(0)}}$). 
Recall from \cite[Page~14]{TSA2022} that there is a continuous free action of $T$ on $\Sigma$ given by 
\begin{equation*}
    t \cdot \sigma= i(r(\sigma),t)\sigma \text{~for all $t\in T$ and $\sigma\in \Sigma$.}
\end{equation*}
When $T=R^{\times}$, we will say the $\twist$ is a \emph{discrete $R$-twist} over $G$, which is our main focus.   

 Note that \cite[Definition~4.1]{TSA2022}
requires $G$ and $\Sigma$ to be Hausdorff, but \cite[Corollary~2.3]{ACCCLMRSS} shows that $\Sigma$ is Hausdorff whenever $G$ is Hausdorff. Moreover, $\Sigma$ is ample whenever $G$ is ample. 
A continuous map $P_\alpha: B_\alpha \to \Sigma$ is called a (\textit{continuous}) \textit{local section} if it satisfies condition \hyperref[DT2]{(DT2)}. If $G$ is ample, then the open bisections from condition \hyperref[DT2]{(DT2)} can be chosen to be compact.

We say that two discrete twists $(\Sigma,i,q)$ and $(\Sigma',i',q')$ by $T \leq R^{\times}$ over Hausdorff ample groupoids $G$ and $G'$, respectively, are \textit{ isomorphic} if there exist groupoid isomorphisms $\psi_{\Sigma}: \Sigma \to \Sigma'$ and $\psi_G:G \to G'$ such that the following diagram commutes:

    \begin{equation*}
        \xymatrix@1{
		G^{(0)} \times T \ar[r]^-{i}  \ar@{=}[d]~ &~ \Sigma \ar[r]^{q} \ar[d]^{\psi_{\Sigma}}  & G \ar[d]^{\psi_G~~.} \\
		G^{(0)} \times T  \ar[r]^-{i'} &\Sigma'  \ar[r]^-{q'} & G'}
    \end{equation*}

\subsection{Twisted Steinberg Algebras}
Given a topological space $X$ and a ring $R$, we write $C(X,R)$ for the $R$-module of locally constant functions from $X$ to $R$. We define the \emph{support} of $f \in C(X,R)$ to be the set $$\spp(f):= \{ x\in X: f(x)\neq 0 \},$$ which is a clopen set. 

Following \cite[Definition~2.6]{ACCCLMRSS} for the definition of a twisted Steinberg algebra $\TS$, let $G$ be an ample Hausdorff groupoid, and let $\twist$ be a discrete $R$-twist over $G$. A function $f\in C(\Sigma,R)$ is $R^{\times}$\textit{-contravariant} if $f(t \cdot \sigma)=t^{-1}f(\sigma)$ for all $t\in R^{\times}$ and $\sigma \in \Sigma$. Define
$$\TS:= \{ f \in C(\Sigma,R) : f \text{~is $R^{\times}$-contravariant and $q(\spp(f))$ is compact } \}.$$
For any $f\in \TS$, we write $\spg(f):= q(\spp(f)) \subseteq G$, which is clopen. We also have $\spp(f) = q^{-1}(\spg(f))$. 

Then $\TS$ is an $R$-algebra with pointwise addition and scalar multiplication. Multiplication is a little more complicated, for $f, g \in \TS$, let $S: \spg(f) \to \spp(f)$ be any section (not necessarily continuous) for $q$ on $\spg(f)$. Then the following formula does not depend on the choice of $S$ and defines an associative multiplication on $\TS$, making it into an $R$-algebra:  
\begin{equation*}
    (f*g)(\sigma) = \sum\limits_{\alpha\in G^{r(\sigma)} \cap \spg(f)} f(S(\alpha))g(S(\alpha)^{-1}\sigma).
\end{equation*}

For each compact open bisection $B \subseteq \Sigma$, we write $\til_B$ for the element of $\TS$ defined such that
\[
\til_B(\sigma) =\begin{cases}
    t^{-1}&\text{if } \sigma \in tB \text{ for some }t \in R^{\times}\\
    0&\text{otherwise.}
\end{cases}
\]
Notice that $\spg(\til_B) = q(B)$ and \cite[Proposition~2.8]{ACCCLMRSS} says that $\TS$ is spanned by the $\til_B$'s.

Parallel to the three types of algebraic pairs, there are three corresponding types of twists.  We have already mentioned two of them, that is, when $G$ is principal and when $G$ is effective.  The third condition is a condition about the associated twisted Steinberg algebra.  We say that a discrete $R$-twist $\twist$ over an ample Hausdorff groupoid $G$ \textit{satisfies the local bisection hypothesis} if for every normaliser $n$ of $\TSAU$ in $\TS$, the set $\spg(n)$ is an open bisection of $G$.

\section{Graded Algebraic Pairs}\label{gradedpairs}
In this section we introduce graded generalisations of the algebraic pairs introduced in \cite{ACCCLMRSS}. For graded algebraic pairs, we focus on homogeneous normalisers in place of more general normalisers, and study commutative subalgebras of the homogeneous component of degree $\id$. 

The definition of a $\Gamma$-Cartan pair of C*-algebras graded by a discrete abelian group $\Gamma$ can be found in \cite[Definition~3.10]{mot2}. The following definition is our algebraic version without requiring $\Gamma$ to be abelian. We also add graded versions of the other types of algebraic pairs introduced in \cite{ACCCLMRSS}. 
\begin{defn} \label{defgradedpair} Let $\Gamma$ be a group with identity $\epsilon$, and let $R$ be an indecomposable commutative ring. Suppose that $A=\bigoplus_{\gamma \in \Gamma}A_{\gamma}$ is a $\Gamma$-graded $R$-algebra, and $C$ is a commutative subalgebra of $A_\id$. We say that the pair $(A,C)$ is a \textit{graded algebraic diagonal} (resp. \textit{Cartan}, \textit{ quasi-Cartan}) \textit{pair} if
	\begin{enumerate}[label=(\roman*), font=\normalfont]
		\item $(A_\id, C)$ is an ADP (resp. ACP, AQP); and
		\item $\nor$ spans $A$.
	\end{enumerate}
\end{defn}

For convenience, we will sometimes say that the pair $(A,C)$ is a \GADP~ (resp. \GACP, \GAQP) instead of graded algebraic diagonal (resp. Cartan, quasi-Cartan) pair.

\begin{example}
    Let $R$ be an indecomposable commutative ring. Consider the $\Z$-graded algebra $A:=M_n(R)$ with homogeneous components given by
$$A_s = \left\{ \sum\limits_{j-i=s} a_{ij}E_{ij} : a_{ij} \in R  \right\}, \text{~~~ for }1-n \leq s \leq n-1,$$
	and  $A_s=\{0\}$, otherwise. Then one can check that $(A,A_\id)$ is a \GADP.
\end{example}

\begin{rmk} ~
\begin{enumerate}[label=(\arabic*), font=\normalfont]
    \item Let $R$ be an indecomposable commutative ring and let $\Gamma$ be a group with identity $\epsilon$. If an $R$-algebra $A$ is trivially $\Gamma$-graded, in the sense that $A_\id=A$ and $A_\gamma=\{0\}$ for all $\gamma \neq \id$, then $(A,C)$ is a \GADP~(resp. \GACP, \GAQP) if and only if $(A,C)$ is an ADP (resp. ACP, AQP). 
    \item Using \cite[Lemmas~3.5 and 3.6]{ACCCLMRSS}, every graded algebraic diagonal pair is a graded algebraic Cartan pair and every graded algebraic Cartan pair is a graded algebraic quasi-Cartan pair. 
\end{enumerate}

\end{rmk}

\section{Graded Discrete Twist}\label{gradedtwist}
In this section we introduce a graded structure to  discrete twists. 
The definition of a classical twist graded by a discrete abelian group $\Gamma$ can be found in \cite[Definition~2.24]{mot2}. The following is our discretised version and we do not require $\Gamma$ to be abelian. 
\begin{defn}
    Let $G$ be an ample Hausdorff groupoid, let $R$ be a commutative unital ring, let $T$ be a subgroup of $R^\times$ and  let $\Gamma$ be a discrete group. 
	A $\Gamma$\textit{-graded discrete twist} is a discrete twist 
	$(\Sigma,i,q)$ by $T$ over $G$ 
	together with continuous groupoid homomorphisms $c_{\Sigma}: \Sigma \to \Gamma$ and $c_{G}: G \to \Gamma$ such that the following diagram commutes
	
\begin{equation}\label{comdiag}
    	\xymatrix@1{
		G^{(0)} \times T \ar[r]^-{i} ~ &~ \Sigma \ar[r]^{q} \ar[dr]_{c_{\Sigma}}  & G \ar[d]^{c_{G}~~.} \\
		&& \Gamma.
	}
\end{equation}
\end{defn}

Just as in \cite{ACCCLMRSS}, we will only be interested in the setting where $T=R^{\times}$ and in this case, we call it a \emph{graded discrete $R$-twist} over $G$ and denote it by $\gtwist$ or $\gtwists$. Note that the continuous groupoid homomorphisms $\cigma$ and $\cg$ are $\Gamma$-gradings on $\Sigma$ and $G$, respectively.

For each $\gamma \in \Gamma$, the sets $G_\gamma:=\cgi(\gamma)$ and $\Sigma_\gamma:= \csigi(\gamma)$ are clopen since $\cg$ and $\cigma$ are continuous and $\Gamma$ is discrete. 
The commutativity of \hyperref[comdiag]{diagram \pref{comdiag}} implies $\Sz=q^{-1}(\Gz)$ and $\Gz^{(0)}=G^{(0)}$. Note that since $\Gz$ is an open subgroupoid, by restricting the maps $i$ and $q$ we get a discrete $R$-twist $(\Gz)^{(0)} \times R^{\times} \to \Sz \to \Gz$ over an ample Hausdorff groupoid $\Gz$  and denote this twist by $\Sz$ (see \cite[Lemma~3.6]{BCF2024}).
The following result generalises \cite[Proposition~7.1]{TSA2022}.
\begin{prop}\label{gtsa}
    Suppose that $\gtwist$ is a graded discrete $R$-twist over an ample Hausdorff groupoid $G$. Then $\TS$ is a $\Gamma$-graded $R$-algebra with homogeneous components given by 
    \begin{equation}\label{gtsacomp}
        \TS_\gamma := \{ f \in \TS : \spg(f) \subseteq G_\gamma \}.
    \end{equation}
\end{prop}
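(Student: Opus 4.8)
The plan is to verify the two defining properties of a $\Gamma$-grading for the components $\TS_\gamma$ in \pref{gtsacomp}: that $\TS=\bigoplus_{\gamma\in\Gamma}\TS_\gamma$ as $R$-modules, and that $\TS_\gamma\cdot\TS_\lambda\subseteq\TS_{\gamma\lambda}$. Since $\TS$ is already known to be an $R$-algebra, only the grading is at issue. Throughout I would use the commutativity of \pref{comdiag}, namely $\cigma=\cg\circ q$, so that the $\Gamma$-degree of a homogeneous element of $\Sigma$ is detected by $q$; in particular, because $\spp(f)=q^{-1}(\spg(f))$, we have $\spp(f)\subseteq\Sigma_\gamma$ exactly when $\spg(f)\subseteq G_\gamma$.

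First I would check that each $\TS_\gamma$ is an $R$-submodule, which is immediate from $\spg(f+g)\subseteq\spg(f)\cup\spg(g)$ and $\spg(tf)\subseteq\spg(f)$. For the decomposition, given $f\in\TS$ I would define $f_\gamma$ by $f_\gamma(\sigma)=f(\sigma)$ for $\sigma\in\Sigma_\gamma$ and $f_\gamma(\sigma)=0$ otherwise, i.e.\ the restriction of $f$ to the clopen set $\Sigma_\gamma$ extended by zero. Because $\Sigma_\gamma$ is clopen, $f_\gamma$ remains locally constant; and because the $R^{\times}$-action is $\id$-graded (indeed $t\cdot\sigma=i(r(\sigma),t)\sigma$ with $i(r(\sigma),t)\in\Sigma_\id$, so $\cigma(t\cdot\sigma)=\cigma(\sigma)$), the action preserves each $\Sigma_\gamma$ and $f_\gamma$ is still $R^{\times}$-contravariant. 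As $\spg(f_\gamma)=\spg(f)\cap G_\gamma$ is clopen with compact closure, $f_\gamma\in\TS_\gamma$. The compact set $\spg(f)$ is covered by the pairwise disjoint clopen sets $\{G_\gamma\}$, hence meets only finitely many of them, so $f=\sum_\gamma f_\gamma$ is a finite sum. Directness is then clear: any nonzero element of $\TS_\gamma$ is supported in $\Sigma_\gamma$, the $\Sigma_\gamma$ are pairwise disjoint, and so $\TS_\gamma\cap\sum_{\lambda\neq\gamma}\TS_\lambda=\{0\}$.

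The multiplicative compatibility is where the cocycle does the real work. Taking $f\in\TS_\gamma$, $g\in\TS_\lambda$, and a section $S$ for $q$ on $\spg(f)$, I would inspect the convolution formula: if $(f*g)(\sigma)\neq 0$, then some term $f(S(\alpha))g(S(\alpha)^{-1}\sigma)$ is nonzero, forcing $S(\alpha)\in\spp(f)\subseteq\Sigma_\gamma$ and $S(\alpha)^{-1}\sigma\in\spp(g)\subseteq\Sigma_\lambda$. Applying the homomorphism $\cigma$ gives $\cigma(\sigma)=\cigma(S(\alpha))\,\cigma(S(\alpha)^{-1}\sigma)=\gamma\lambda$, so $\sigma\in\Sigma_{\gamma\lambda}$. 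Hence $\spp(f*g)\subseteq\Sigma_{\gamma\lambda}$, equivalently $\spg(f*g)\subseteq G_{\gamma\lambda}$, and therefore $f*g\in\TS_{\gamma\lambda}$.

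I expect the genuinely delicate points to be bookkeeping rather than conceptual: confirming that $f_\gamma$ stays inside $\TS$ (the $R^{\times}$-contravariance, which rests on the $T$-action being $\id$-graded) and that the decomposition is finite (the compactness-meets-finitely-many-clopen-pieces argument). Once $\cigma=\cg\circ q$ is used to transport degrees between $\Sigma$ and $G$, the grading axiom $\TS_\gamma\cdot\TS_\lambda\subseteq\TS_{\gamma\lambda}$ falls out directly from the support computation above.
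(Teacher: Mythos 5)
Your proof is correct, and it reaches the decomposition $\TS=\bigoplus_\gamma\TS_\gamma$ by a slightly different route than the paper. The paper first invokes the spanning result \cite[Proposition~2.8]{ACCCLMRSS} and the argument of \cite[Lemma~3.1]{CS2015} to split each generator $\til_B$ into homogeneous pieces, and only then deduces the decomposition for general $f$; directness is obtained, as in your argument, from disjointness of the supports $\spp(f_\gamma)\subseteq\Sigma_\gamma$. You instead decompose an arbitrary $f$ directly by restricting to the clopen sets $\Sigma_\gamma$ and extending by zero, using compactness of $\spg(f)$ to see that only finitely many pieces are nonzero. This is self-contained (it avoids the detour through the $\til_B$'s and the external reference), and the points you single out as delicate are exactly the right ones: that $f_\gamma$ remains $R^{\times}$-contravariant because the $T$-action is $\id$-graded ($\cigma(i(r(\sigma),t))=\cg(r(\sigma))=\id$, so $\cigma(t\cdot\sigma)=\cigma(\sigma)$), and that $q(\spp(f_\gamma))=\spg(f)\cap G_\gamma$ is compact as a clopen subset of a compact set. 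Your verification of $\TS_\gamma * \TS_\lambda\subseteq\TS_{\gamma\lambda}$ via the convolution formula spells out what the paper dismisses as straightforward, and is the argument one would write in full. The only cosmetic quibble is the phrase ``clopen with compact closure''; the set is itself compact, which is what membership in $\TS$ requires.
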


\begin{proof}
    It is clear that $\TS_\gamma$ is an $R$-submodule of $\TS$ for each $\gamma\in \Gamma$. 
    The argument used in \cite[Lemma~3.1]{CS2015} can be used to show that for a compact open bisection $B\subseteq \Sigma$, the function $\til_B$ as described in  \cite[Proposition~2.8]{ACCCLMRSS} belongs to $\bigoplus_{\gamma\in \Gamma} \TS_\gamma$. So to see that $\TS= \bigoplus_{\gamma\in \Gamma} \TS_\gamma$ it suffices to show that any finite collection $$\{f_i \in \TS_{\gamma_i}: 1 \leq i \leq n \text{~and each $\gamma_i$ is distinct from others}  \}$$
    is linearly independent. But this is clear because for any $i$, $\spp(f_i)=q^{-1}(\spg(f_i)) \subseteq \csigi(\gamma_i)$ so that  $\spp(f_i) \cap \spp(f_j) = \emptyset$ whenever $i\neq j$. It is also straightforward to show that if $f\in \TS_\gamma$ and $g\in \TS_\lambda$, then $f*g \in \TS_{\gamma\lambda}$.
\end{proof}

Given a graded discrete $R$-twist $\gtwists$, we denote by $\GTSA$ the graded twisted Steinberg algebra with homogeneous components given by \cref{gtsacomp}. Observe that the subalgebra $\TS_\id$ contains $\TSAU$ since $G^{(0)} \subseteq G_\id$. 
Since $\Sigma$ is ample,  it admits a basis $\mathcal{B}$ of compact open bisections. If we replace $\mathcal{B}$ with the refinement $\mathcal{B}':= \{ B \cap \Sigma_\gamma : B \in \mathcal{B} \text{~and~} \gamma\in \Gamma \}$, we obtain a basis of homogeneous compact open bisections. With this observation, the proof of following lemma is routine and follows from the ungraded results \cite[Proposition~2.8~and~Corollary~2.10]{ACCCLMRSS}.

\begin{lem}\label{spannorm}
     Let $R$ be an indecomposable ring and let $\gtwists$ be a graded discrete $R$-twist over an ample Hausdroff groupoid $G$. Suppose that $A= \GTSA$ and  $C= \TSAU$. 
     \begin{enumerate}[label=(\alph*), font=\normalfont]
         \item \label{homofunction} For each $f\in \TS_\gamma$, there exists a finite set $\mathcal{F}$ of homogeneous compact open bisections of $\Sigma$ with mutually disjoint images in $G$, and elements $r_B$ of $R$, for each $B\in \mathcal{F}$, such that $f=\sum_{B \in \mathcal{F}}r_B\til_B$. 
         \item \label{homonorm} For each homogeneous compact open bisection $B \subseteq \Sigma$, $\til_B$ is a homogeneous normaliser of $C$ in $A$. In particular, if $B \subseteq \Sigma_\gamma$ for some $\gamma\in \Gamma$, then $\til_B \in \TS_\gamma$.
     \end{enumerate}
\end{lem}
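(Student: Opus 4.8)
The plan is to reduce both parts to their ungraded counterparts, \cite[Proposition~2.8~and~Corollary~2.10]{ACCCLMRSS}, via the single observation that an element of $\TS_\gamma$ is supported entirely inside $\Sigma_\gamma$. Indeed, the commutativity of \hyperref[comdiag]{diagram \pref{comdiag}} gives $\cigma = \cg \circ q$, whence $\Sigma_\gamma = \csigi(\gamma) = q^{-1}(\cgi(\gamma)) = q^{-1}(G_\gamma)$; so for $f \in \TS_\gamma$ we get $\spp(f) = q^{-1}(\spg(f)) \subseteq q^{-1}(G_\gamma) = \Sigma_\gamma$. This confines the whole argument to a single degree.

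For part~\ref{homofunction}, I would cover the compact set $\spg(f)$ by finitely many images $q(B)$ of elements $B$ of the refined homogeneous basis $\mathcal{B}'$. Since each such $q(B)$ meets $\spg(f) \subseteq G_\gamma$, the bisection $B$ meets $\Sigma_\gamma$ and, being homogeneous, must itself lie in $\Sigma_\gamma$. Feeding this finite cover into the disjointification procedure of \cite[Proposition~2.8~and~Corollary~2.10]{ACCCLMRSS} then yields $f = \sum_{B \in \mathcal{F}} r_B \til_B$ with $\mathcal{F}$ finite and the images $q(B)$ mutually disjoint. The only point beyond the ungraded statement is that disjointification stays within degree $\gamma$: the intersections and relative complements used there are taken among subsets of $\Sigma_\gamma$, and since the homogeneous compact open bisections form an inverse subsemigroup, every resulting bisection is again a homogeneous compact open bisection of degree $\gamma$.

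For part~\ref{homonorm}, the normaliser conditions in \cref{normaliser} are purely algebraic and make no reference to the grading, so $\til_B$ is a normaliser of $C$ in $A = \GTSA$ precisely when it is a normaliser of $C$ in the underlying algebra $\TS$; the latter is the ungraded content of \cite[Corollary~2.10]{ACCCLMRSS} (with inverse $\til_{B^{-1}}$). For homogeneity, if $B \subseteq \Sigma_\gamma$ then $\cg(q(B)) = \cigma(B) = \{\gamma\}$, so $\spg(\til_B) = q(B) \subseteq G_\gamma$ and hence $\til_B \in \TS_\gamma$ by \cref{gtsacomp}. Thus $\til_B$ is simultaneously homogeneous and a normaliser, i.e.\ a homogeneous normaliser, and this last computation is exactly the \emph{In particular} claim.

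I expect the only genuine bookkeeping to be the degree-preservation in part~\ref{homofunction}: one must check that none of the set operations performed in the disjointification step leave the component $\Sigma_\gamma$. This is handled by the inverse-subsemigroup structure of the homogeneous compact open bisections recorded just before the lemma, after which the argument is routine.
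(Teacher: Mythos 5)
Your proposal is correct and follows essentially the same route as the paper, which explicitly reduces the lemma to the ungraded results \cite[Proposition~2.8~and~Corollary~2.10]{ACCCLMRSS} after refining the basis of $\Sigma$ to one of homogeneous compact open bisections; your added observations (that $\spp(f)\subseteq\Sigma_\gamma$ and that the disjointification stays inside $\Sigma_\gamma$ because the homogeneous compact open bisections form an inverse subsemigroup) are exactly the routine details the paper leaves to the reader.
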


\begin{prop}\label{sathypo}
    Let $R$ be an indecomposable commutative ring, let $\Gamma$ be a discrete group with identity $\id$, and let $\gtwist$ be a graded discrete $R$-twist over an ample Hausdorff groupoid $G$. Let $A=\GTSA$ and let $C=\TSAU \subseteq A_\id$. The pair $(A_\id, C)$ satisfies conditions \ref{paircon1}-\ref{paircon4} of \cite[Definition~3.3]{ACCCLMRSS} with faithful conditional expectation $P:A_\id \to C$ given by restriction of functions from $\Sigma_\id$ to $q^{-1}(G^{(0)})$. Moreover, $A$ is spanned by the homogeneous normalisers $\nor$ of $C$, that is, $A=\ls(\nor)$.
\end{prop}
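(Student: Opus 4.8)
The plan is to prove the statement in two movements: first establish that $(A_\id, C)$ satisfies conditions \ref{paircon1}--\ref{paircon4} of an algebraic pair, and then prove the spanning statement $A = \ls(\nor)$. For the first movement, the key observation is that $(\Sz, i, q) \to \Gz$ is itself a discrete $R$-twist over the ample Hausdorff groupoid $\Gz$ (as noted in the excerpt just before \cref{gtsa}), and that $A_\id = \TS_\id$ is exactly the (ungraded) twisted Steinberg algebra $A_R(\Gz; \Sz)$ associated to this restricted twist, while $C = \TSAU = A_R((\Gz)^{(0)}; q^{-1}((\Gz)^{(0)}))$ since $(\Gz)^{(0)} = G^{(0)}$. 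Therefore the first movement reduces entirely to the ungraded theory: I would invoke \cite[Proposition~7.1]{TSA2022} or the corresponding result in \cite{ACCCLMRSS} applied to the twist $\Sz \to \Gz$, which already guarantees that conditions \ref{paircon1}--\ref{paircon4} hold and that the restriction map $P \colon A_\id \to C$, sending $f \in \TS_\id$ to $f|_{q^{-1}(G^{(0)})}$, is a faithful conditional expectation.

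The heart of the argument is verifying that this restriction map is well-defined and genuinely faithful in our setting. For well-definedness I would check that $P(f)$ is $R^\times$-contravariant with compact (hence clopen) support inside $q^{-1}(G^{(0)})$, so $P(f) \in C$; the conditional-expectation identities $P|_C = \ID_C$ and $P(cfc') = cP(f)c'$ for $c, c' \in C$ follow from the convolution formula and the fact that elements of $C$ are supported on the unit space. Faithfulness is the assertion that for every nonzero $f \in A_\id$ there is a normaliser $n$ with $P(n * f) \neq 0$: here I would use \cref{spannorm}\ref{homofunction} to write $f$ as a finite $R$-combination $\sum_B r_B \til_B$ of the standard generators with \emph{mutually disjoint} images in $G$, pick a point $\sigma$ in the support of some $\til_B$ with $r_B \neq 0$, and take $n = \til_{B'}$ for a suitable compact open bisection $B'$ so that $n * f$ is nonzero at a unit. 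The disjointness of images is what makes the pairing detect a single nonzero coefficient, and the without-torsion/indecomposability hypothesis on $R$ ensures $r_B \til_B \neq 0$.

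For the second movement, $A = \ls(\nor)$, the plan is direct. By \cref{spannorm}\ref{homofunction}, every homogeneous $f \in \TS_\gamma$ is a finite $R$-linear combination of functions $\til_B$ where $B$ ranges over homogeneous compact open bisections of $\Sigma$; and by \cref{spannorm}\ref{homonorm}, each such $\til_B$ is a homogeneous normaliser of $C$ in $A$, lying in $\TS_\gamma$ when $B \subseteq \Sigma_\gamma$. Since \cref{gtsa} gives the direct-sum decomposition $A = \bigoplus_{\gamma \in \Gamma} \TS_\gamma$, an arbitrary element of $A$ is a finite sum of homogeneous pieces, each of which lies in $\ls(\nor)$ by the above. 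Hence $A = \ls(\nor)$.

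The step I expect to be the main obstacle is the faithfulness of $P$. In the ungraded world one has a conditional expectation defined on the whole algebra, but here $P$ lives only on $A_\id$, so I must be careful that ``faithful'' is tested against normalisers of $C$ \emph{in $A_\id$} (equivalently, in the ungraded algebra $A_R(\Gz;\Sz)$), not against the larger supply of homogeneous normalisers in all of $A$. Concretely, I need the normaliser $n$ witnessing $P(n*f) \neq 0$ to be a genuine $\id$-graded normaliser; this is exactly where the identification $A_\id = A_R(\Gz;\Sz)$ does the work, letting me borrow the ungraded faithfulness argument of \cite[Proposition~7.1]{TSA2022} verbatim rather than re-deriving it. Verifying that this identification is an isomorphism of the relevant \emph{pairs} — not merely of algebras — is the delicate point on which the whole first movement turns.
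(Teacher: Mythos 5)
Your proposal is correct and follows essentially the same route as the paper: the paper likewise identifies $A_\id$ with $A_R(\Gz;\Sz)$ (by extending functions by zero) and delegates conditions \ref{paircon1}--\ref{paircon4} and the faithful conditional expectation to the ungraded result \cite[Lemma~4.2]{ACCCLMRSS} applied to the restricted twist $\Sz \to \Gz$, then obtains $A=\ls(\nor)$ from \cref{spannorm} together with the decomposition $A=\bigoplus_{\gamma}\TS_\gamma$ of \cref{gtsa}. The extra detail you supply on faithfulness is not needed once the identification of pairs is made, exactly as you anticipated.
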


\begin{proof}
Recall that $\Sigma_\id \to G_\id$ is a discrete $R$-twist over an ample Hausdorff groupoid $G_\id$ with $G_\id^{(0)}=G^{(0)}$. We can identify $A_R(\Gz,\Sz)$ with $A_\id$ by extending functions by zero. The first statement follows from \cite[Lemma~4.2]{ACCCLMRSS}. The last statement follows from \cref{spannorm} and from the fact that $A= \bigoplus_{\gamma\in\Gamma} \TS_\gamma$ by \cref{gtsa}.
\end{proof}

The following corollary is a consequence of \cite[Proposition~4.8~and~Lemma~4.2]{ACCCLMRSS} and \cref{sathypo}.
\begin{corollary}\label{iff}
    Let $R$ be an indecomposable commutative ring and let $\Gamma$ be a discrete group with identity $\id$. Suppose that $\gtwists$ is a graded discrete $R$-twist over an ample Hausdorff groupoid $G$. If $\Gz$ is effective, then $(A,C):= (\GTSA, \TSAU)$ is a \GACP ~and if $\Gz$ is principal, then $(A,C)$ is a \GADP. Moreover, the pair $(A,C)$ is a \GAQP ~if and only if  the twist $\Sz$ over $\Gz$ satisfies the local bisection hypothesis. 
\end{corollary}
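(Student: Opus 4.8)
The plan is to reduce the entire statement to the ungraded classification applied to the degree-$\id$ twist $\Sz$ over $\Gz$, and to observe that the extra graded requirement in \cref{defgradedpair}(ii) is already available for free. Recall that $(A,C)$ is a \GADP~(resp. \GACP, \GAQP) precisely when $(A_\id,C)$ is an ADP (resp. ACP, AQP) and $\nor$ spans $A$. The spanning condition $A=\ls(\nor)$ is exactly the ``Moreover'' conclusion of \cref{sathypo}, so it holds in all three cases with no further hypotheses. Thus for each of the three claims it remains only to establish the corresponding ungraded property of the pair $(A_\id,C)$.

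First I would record the identification already used in the proof of \cref{sathypo}, namely that $A_\id=\TS_\id$ is isomorphic to $A_R(\Gz,\Sz)$: every $f\in \TS_\id$ satisfies $\spp(f)=q^{-1}(\spg(f))\subseteq q^{-1}(\Gz)=\Sz$, so restriction to $\Sz$ (equivalently, extension by zero) is an $R$-algebra isomorphism that fixes $C=\TSAU$ pointwise. Under this isomorphism the normalisers $N_{A_\id}(C)$ correspond to the normalisers of $C$ in $A_R(\Gz,\Sz)$, and the faithful conditional expectation furnished by \cref{sathypo} is carried to the restriction expectation of the ungraded twist. Consequently the two pairs are interchangeable for the purpose of deciding the ADP/ACP/AQP properties, and the local bisection hypothesis for $\Sz$ over $\Gz$ — a statement about the normalisers of $C$ in $A_R(\Gz,\Sz)$ — transports to the corresponding statement about $N_{A_\id}(C)$.

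With this in hand the three conclusions follow from the ungraded classification \cite[Proposition~4.8]{ACCCLMRSS}, together with \cite[Lemma~4.2]{ACCCLMRSS} supplying the conditional expectation, applied to the discrete $R$-twist $\Sz$ over the ample Hausdorff groupoid $\Gz$. If $\Gz$ is principal, that proposition gives that $(A_R(\Gz,\Sz),C)$, hence $(A_\id,C)$, is an ADP; combined with $A=\ls(\nor)$ this yields a \GADP. If $\Gz$ is effective, the same proposition gives an ACP, hence a \GACP. For the final equivalence, since $A=\ls(\nor)$ is automatic, $(A,C)$ is a \GAQP~if and only if $(A_\id,C)$ is an AQP, which by \cite[Proposition~4.8]{ACCCLMRSS} holds if and only if $\Sz$ over $\Gz$ satisfies the local bisection hypothesis.

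I expect no serious obstacle: the real content is that the graded spanning condition is already delivered by \cref{sathypo}, after which each statement is just the ungraded theorem applied to $\Sz\to\Gz$. The only point needing care is verifying that the identification $A_\id\cong A_R(\Gz,\Sz)$ is compatible with normalisers and with the definition of the local bisection hypothesis, so that the ungraded hypotheses and conclusions transport verbatim; this is routine once one notes that the identification is an $R$-algebra isomorphism fixing $C$.
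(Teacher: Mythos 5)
Your proposal is correct and follows exactly the route the paper takes: the paper derives \cref{iff} directly from \cite[Proposition~4.8 and Lemma~4.2]{ACCCLMRSS} together with \cref{sathypo}, using the same identification $A_\id\cong A_R(\Gz;\Sz)$ and the observation that the graded spanning condition $A=\ls(\nor)$ is the ``Moreover'' part of \cref{sathypo}. You have simply written out the details that the paper leaves implicit.
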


\section{Graded discrete twists from graded algebraic pairs}\label{homogrpds}
Throughout this section, we assume that $R$ is an indecomposable commutative ring and $\Gamma$ is a group with identity $\epsilon$. We also assume that $A=\bigoplus_{\gamma \in \Gamma} A_\gamma$ is a $\Gamma$-graded $R$-algebra and $C\subseteq A_\id$ is a commutative subalgebra such that $C$ satisfies the without torsion condition in the sense of \hyperref[WT]{condition \pref{WT}} (that is, for $e \in I(C)$ and $t\in R$, if $te=0$, then $t=0$ or $e=0$), and $I(C)$ forms a set of local units for $A_\id$.

\subsection{The inverse semigroup \texorpdfstring{$\nor$}{}}
There is a natural groupoid of ultrafilters that arises from any inverse semigroup as described in \cite{Lenz,LMS2013}.  The authors in \cite{ACCCLMRSS} use the entire inverse semigroup $N(C)$ of normalisers of $C$ to build a groupoid of ultrafilters, but in our case, we consider an inverse subsemigroup $\nor$ of homogeneous normalisers of $C$ contained in $N(C)$, see \cref{invsem}.
The following observations are straightforward and are needed to guarantee the uniqueness of the inverse in our proposed inverse semigroup of homogeneous normalisers.
\begin{lem}\label{I(C)}Suppose that $A$ is a $\Gamma$-graded $R$-algebra, $C \subseteq A_\id$ is a commutative subalgebra such that $I(C)$ forms a set of local units for $A_\id$, and $A=\ls(\nor)$. Then the following hold.
\begin{enumerate}[label=(\alph*), font=\normalfont]
    \item \label{closed} $\nor$ is closed under multiplication.
   \item \label{norminv} Let $n\in N(C) \cap A_\gamma$ for some $\gamma \in \Gamma$. If $m\in A$ satisfies \cref{normaliser} for $n$, then $m \in N(C) \cap A_{\gamma^{-1}}$. In particular, $nm,mn \in A_\id$.
   \item \label{idem} Any idempotent $e \in I(C)$ is homogeneous. In particular, $e\in A_\id$.
    \item \label{localunit1} $I(C)$ forms a set of local units for $\nor$.
    \item \label{localunit2} $I(C)$ forms a set of local units for $A$.
    \item \label{homoelement} Suppose that $a_\alpha\in A_\alpha$ for some $\alpha\in \Gamma$. Then there exist $n_i \in \nor \cap A_\alpha$ and $t_i \in R \setminus \{0\}$ such that $a_\alpha=\sum\limits_{i=1}^{k}t_in_i$.   
\end{enumerate}
\end{lem}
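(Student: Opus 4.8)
The plan is to prove the six statements in a logical order that differs from their listing, since several of them depend on one another. The genuinely non-trivial step is \ref{norminv}; everything else either precedes it by an easy degree comparison or follows from it together with the standing hypothesis $A=\ls(\nor)$. The unifying theme is that, although $I(C)$ is only assumed to be a set of local units for $A_\id$, every product that the normaliser relations force into $C$ automatically lands in $A_\id$, so the hypothesis is strong enough to run the usual arguments.

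Two of the claims I would dispatch immediately. For \ref{idem}, simply note that $I(C)\subseteq C\subseteq A_\id$, so every idempotent of $C$ is homogeneous of degree $\id$. For \ref{homoelement}, I would use $A=\ls(\nor)$ to write $a_\alpha=\sum_j t_j n_j$ with $t_j\in R\setminus\{0\}$ and $n_j\in\nor$, group the homogeneous normalisers $n_j$ by their degrees, and invoke uniqueness of the graded decomposition of $a_\alpha\in A_\alpha$: the degree-$\alpha$ part of the sum must equal $a_\alpha$ and every other degree part must vanish, leaving $a_\alpha=\sum_{j:\,\deg(n_j)=\alpha} t_j n_j$ with each $n_j\in\nor\cap A_\alpha$, as required.

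The technical core is a local-unit computation that feeds both \ref{localunit1} and the hard step \ref{norminv}. Given $n\in N(C)\cap A_\gamma$ with partner $m$ as in \cref{normaliser}, I would decompose $m=\sum_\lambda m_\lambda$ into homogeneous components. Comparing degrees in $nmn=n$ forces $n m_{\gamma^{-1}} n=n$ (and $n m_\lambda n=0$ for $\lambda\neq\gamma^{-1}$); writing $m':=m_{\gamma^{-1}}$, both $nm'$ and $m'n$ lie in $A_\id$ and are idempotent since $nm'n=n$. Applying the local units of $I(C)$ on $A_\id$ to these two idempotents and using $nm'n=n$ produces $e\in I(C)$ with $en=n=ne$. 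Running this for a finite family of homogeneous normalisers and taking a common local unit yields \ref{localunit1}, and then \ref{localunit2} follows at once, because any finite subset of $A=\ls(\nor)$ is an $R$-combination of finitely many elements of $\nor$, for which a common local unit exists.

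The main obstacle is \ref{norminv}, where I must kill the off-degree components of $m$. Comparing degrees in $mCn\cup nCm\subseteq C\subseteq A_\id$ shows $m_\lambda Cn=\{0\}=nCm_\lambda$ for every $\lambda\neq\gamma^{-1}$. The decisive trick is to use the local unit $e$ just built to \emph{insert} an element of $C$: for $\lambda\neq\gamma^{-1}$ we have $m_\lambda n=m_\lambda(en)=m_\lambda e\,n\in m_\lambda Cn=\{0\}$ and likewise $nm_\lambda=(ne)m_\lambda=n\,e m_\lambda\in nCm_\lambda=\{0\}$. Hence in $m=mnm=\sum_{\mu,\nu}m_\mu n m_\nu$ only the term $\mu=\nu=\gamma^{-1}$ survives, so $m=m'nm'\in A_{\gamma^{-1}}$; that $m\in N(C)$ is immediate because $m$ and $n$ satisfy \cref{normaliser} symmetrically, and $nm,mn\in A_\id$ then follows. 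Finally \ref{closed} is the standard inverse-semigroup verification: homogeneity of $n_1n_2\in A_{\gamma_1\gamma_2}$ is clear, and using a common local unit from \ref{localunit1} one checks that $n_im_i,m_in_i\in C$ are idempotents and that $m_2m_1$ is a partner for $n_1n_2$, where commutativity of $C$ is exactly what is needed to collapse $(n_1n_2)(m_2m_1)(n_1n_2)$ to $n_1n_2$; thus $n_1n_2\in\nor$.
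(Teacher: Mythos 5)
Your proof is correct, and at the crucial step \ref{norminv} it takes a genuinely different (and more robust) route than the paper. The paper compares degrees in the identity $n=nmn=\sum_\lambda nm_\lambda n$ to conclude $nm_\lambda n=0$ for $\lambda\neq\gamma^{-1}$ and then asserts $m=m_{\gamma^{-1}}$; that last inference is left implicit and does not obviously follow from $nm_\lambda n=0$ alone. You instead compare degrees in $mCn\cup nCm\subseteq C\subseteq A_\id$ to get $m_\lambda Cn=nCm_\lambda=\{0\}$ for $\lambda\neq\gamma^{-1}$, manufacture a local unit $e\in I(C)$ for $n$ directly from the idempotents $nm_{\gamma^{-1}},m_{\gamma^{-1}}n\in A_\id$ (which only needs $nm_{\gamma^{-1}}n=n$, so there is no circularity), and then insert $e$ to kill $m_\lambda n$ and $nm_\lambda$, after which $m=mnm=m_{\gamma^{-1}}nm_{\gamma^{-1}}\in A_{\gamma^{-1}}$ is immediate. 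This buys you a complete argument where the paper's is terse, and it also lets you reorder the parts so that local units for homogeneous normalisers are available before they are used: the paper proves \ref{closed} by citing closure of $N(C)$ under multiplication and proves \ref{localunit1} only after \ref{norminv}, both of which quietly lean on local units for the ambient algebra. The remaining parts (\ref{idem}, \ref{localunit2}, \ref{homoelement}, and the degree bookkeeping in \ref{closed} and \ref{homoelement}) coincide with the paper's arguments.
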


\begin{proof}
For part \ref{closed}, suppose that $n,m\in \nor$. Then $nm \in N(C)$ since $N(C)$ is closed under multiplication. Also, there exist $\gamma, \lambda\in \Gamma$ such that $n\in A_\gamma$ and $m \in A_\lambda$. Thus, $nm \in A_{\gamma\lambda}$ so that $nm \in \nor$. For part \ref{norminv}, the result is trivial if $n=0$, and so we assume that $n\neq 0$. It follows that $m\neq 0$ and $m\in N(C)$, so
we are left to show that $m\in A_{\gamma^{-1}}$. 
Note that we can uniquely express  $m = \sum_{\lambda \in \Gamma} m_{\lambda}$ as a finite sum of $m_\lambda \in A_\lambda$,
and so we have
\[ n = nmn = \sum_{\lambda \in \Gamma} n m_{\lambda} n .\]
Since $n \in A_\gamma$ and 
the decomposition is unique, it follows that $n m_{\lambda} n = 0$ for all $\lambda \neq \gamma^{-1}$. Hence, $m = m_{\gamma^{-1}} \in A_{\gamma^{-1}}$, as required. Moreover, $nm \in A_\gamma A_{\gamma^{-1}} \subseteq A_\id$ and $mn \in A_{\gamma^{-1}}A_\gamma \subseteq A_\id$.

Part \ref{idem} follows from $I(C) \subseteq C \subseteq A_\id$.
For part \ref{localunit1}, let $\{n_1, n_2, \ldots, n_k \} \subseteq \nor$. By definition, for each $n_i$, there exists $m_i\in A$ satisfying \cref{normaliser}. By part \ref{norminv}, $m_in_i, n_im_i \in A_\id$ for each $i$. Since $I(C)$ forms a set of local units for $A_\id$, there exists $e \in I(C)$ such that for each $i$, $(m_in_i)e = m_in_i$ and $n_im_i = e(n_im_i)$. Thus, we have $n_ie = n_i = en_i$.

For part \ref{localunit2}, let $\{a_1, \ldots, a_l\} \subseteq A$. Then for each $j= \{1, \ldots, l\}$, we have $a_j = \sum_{i=1}^{k} t_in_i$ where $t_i \in R$ and $n_i\in \nor$. 
By part \ref{localunit1}, there exists $e \in I(C)$ such that for each $i$, we have $n_ie = n_i = en_i$. Hence, we obtain $a_j=ea_j=a_je$.
For part \ref{homoelement}, since $A$ is spanned by $\nor$, there exists $n_i \in \nor$ and $t_i \in R \setminus \{0\}$ such that $a_\alpha=\sum\limits_{i=1}^{k}t_in_i$.  Notice that $t_in_i$ are homogeneous for each $i\in \{1, \ldots, k \}$. Since $a_\alpha$ is homogeneous of degree $\alpha$ and the decomposition is unique,  we can assume that $t_in_i=0$ for all $n_i \notin A_\alpha$. Moreover, since $t_i \neq 0$, each $n_i$ must be in $A_\alpha$.
 
\end{proof}

Recall from \cite[Lemma~2.14]{ACCCLMRSS} that for each $n\in N(C)$, there exists $m \in N(C)$ satisfying \cref{normaliser} and showing that $m$ is  unique uses the assumption that the set of idempotents $I(C)$ forms a set of local units for $A$. Our assumptions only require that $I(C)$ forms a set of local units for $A_\id$, but \cref{I(C)}\ref{localunit2} tells us that this property extends to $A$ whenever $A$ is spanned by its homogeneous normalisers. This together with \cref{I(C)}\ref{norminv} imply that for each $n\in \nor$, there exists $m\in \nor$ satisfying \cref{normaliser} and is unique and we denote it by $n^{\dagger}$. Moreover, if $n\in A_{\gamma}$, then $n^{\dagger} \in A_{\gamma^{-1}}$. Thus, we have the following corollary.

\begin{corollary}\label{invsem}
Suppose that $A$ is a $\Gamma$-graded $R$-algebra, $C \subseteq A_\id$ is a commutative subalgebra such that $I(C)$ forms a set of local units for $A_\id$, and $A=\ls(\nor)$. Then the homogeneous normaliser $\nor$ is an inverse semigroup with inverse $n \mapsto n^{\dagger}$ and its set of idempotents is $I(C)$. \end{corollary}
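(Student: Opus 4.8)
The plan is to check the three ingredients separately: that $\nor$ is a semigroup, that each of its elements has a unique inverse inside $\nor$ (the defining property of an inverse semigroup), and that its idempotents are exactly $I(C)$. The semigroup part is immediate, since multiplication on $A$ is associative and $\nor$ is closed under it by \cref{I(C)}\ref{closed}; thus $\nor$ is a subsemigroup of $(A,\cdot)$.

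For the inverse structure, I would first invoke \cref{I(C)}\ref{localunit2} to see that $I(C)$ is a set of local units for all of $A$, so that \cite[Lemmas~2.14~and~2.15]{ACCCLMRSS} become available: $N(C)$ is an inverse semigroup, and each $n\in N(C)$ admits a unique $m\in N(C)$ satisfying \cref{normaliser}. For $n\in\nor$, say $n\in A_\gamma$, \cref{I(C)}\ref{norminv} places this $m$ in $A_{\gamma^{-1}}$ and hence in $\nor$; writing $n^\dagger:=m$, the relations $n n^\dagger n=n$ and $n^\dagger n n^\dagger=n^\dagger$ exhibit $n^\dagger$ as a semigroup inverse of $n$ inside $\nor$. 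The delicate point is uniqueness: an arbitrary semigroup inverse of $n$ in $\nor$ need not a priori satisfy the $C$-invariance condition of \cref{normaliser}, so I would not appeal directly to the uniqueness statement for \cref{normaliser}. Instead I would route uniqueness through the ambient inverse semigroup: any such inverse lies in $\nor\subseteq N(C)$, is therefore a semigroup inverse of $n$ in $N(C)$, and so must coincide with $n^\dagger$ by uniqueness of inverses in the inverse semigroup $N(C)$. Since every element of $\nor$ then has a unique inverse, $\nor$ is an inverse semigroup with inversion $n\mapsto n^\dagger$.

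Finally I would identify the idempotents with $I(C)$. For the inclusion $I(C)\subseteq\nor$, each $e\in I(C)$ is homogeneous of degree $\id$ by \cref{I(C)}\ref{idem}, and commutativity of $C$ gives $eCe\subseteq C$, so $e\in N(C)\cap A_\id$ with $e^\dagger=e$, whence $e$ is an idempotent of $\nor$. Conversely, let $n\in\nor$ satisfy $n^2=n$. As an idempotent of the inverse semigroup $\nor$ it is its own inverse, $n^\dagger=n$, so the $C$-invariance clause of \cref{normaliser} reduces to $nCn\subseteq C$. Choosing $e\in I(C)$ with $en=n=ne$ via \cref{I(C)}\ref{localunit1}, I compute $n=nen\in nCn\subseteq C$, and $n^2=n$ then forces $n\in I(C)$; hence the idempotent set of $\nor$ is precisely $I(C)$. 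I expect the uniqueness-of-inverse step to be the only genuine obstacle, exactly because one must distinguish semigroup inverses from normaliser inverses: everything else is bookkeeping layered on \cref{I(C)} and the two cited lemmas of \cite{ACCCLMRSS}.
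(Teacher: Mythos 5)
Your proof is correct and follows essentially the same route as the paper's: extend the local units from $A_\id$ to $A$ via \cref{I(C)}\ref{localunit2} so that $N(C)$ is an inverse semigroup by \cite[Lemma~2.15]{ACCCLMRSS}, then verify $\nor$ is closed under products, inverses, and contains the (homogeneous) idempotents $I(C)$ using \cref{I(C)}\ref{closed}--\ref{idem}. You are somewhat more explicit than the paper about uniqueness of semigroup inverses and the reverse inclusion for idempotents, but both points are handled the same way in substance (uniqueness of von Neumann inverses in the ambient inverse semigroup $N(C)$), so this is a matter of detail rather than a different argument.
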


\begin{proof}
    The set $N(C)$ with these operations is an inverse semigroup by \cite[Lemma~2.15]{ACCCLMRSS}. Since $\nor \subseteq N(C)$, we are left to show that $\nor$ is closed, $n^\dagger$ is homogeneous, and any idempotent $e\in I(C)$ is homogeneous. But these follow from Lemmas \ref{I(C)} \ref{closed}-\ref{idem}.
\end{proof}

Given an inverse semigroup,  there is a natural partial order given by $s\leq t$ if and only if $s=ss^{\dagger}t$ (or, equivalently, if and only if $s=ts^{\dagger}s$)  \cite{Lawson}. The partial order is preserved by multiplication and inversion. Furthermore, $s\leq t$ if and only if there is an idempotent $e$ such that $s=te$, and this is equivalent to the existence of an idempotent $f$ such that $s=ft$.

\subsection{The groupoid \texorpdfstring{$\sig$}{pdf}}\label{star}
Assume that $A=\bigoplus_{\gamma\in \Gamma}$ is a $\Gamma$-graded $R$-algebra and $C \subseteq A_\id$ is a commutative subalgebra such that $C$ satisfies \hyperref[WT]{condition \pref{WT}} and $I(C)$ forms a set of local units for $A_{\id}$. Using the natural partial order $\leq$ on the inverse semigroup $\nor$, we define the following concepts in order to define ultrafilters on $\nor$.

If $U\subseteq \nor$, then the \textit{upclosure} of $U$ is the set
	\[ U\up:= \{m\in \nor: \text{there exists $n\in U$ with $n\leq m$}\}.\]
A \textit{filter} on $\nor$ is a subset $U \subseteq \nor\backslash \{0\}$ such that $U=U\up$ and whenever $m,n \in U$, there exists $p\in U$ such that $p\leq m,n$. An \textit{ultrafilter} is a maximal filter. The collection $\sig$ of all ultrafilters on $\nor$ forms a groupoid with the following structure (see \cite[Proposition~9.2.1]{Lawson} and \cite[Proposition 2.13]{Lawson2}).
\begin{enumerate}[label=(\arabic*), font=\normalfont]
    \item For each $U\in \sig$, $U^{-1}:= \{ n^{\dagger} : n \in U\}$.
    \item A pair $(U,V)$ of ultrafilters on $\nor$ is composable if and only if $m^{\dagger}mnn^{\dagger} \neq 0$ (or, equivalently, if and only if $mn\neq 0$) for all $m \in U$ and $n\in V$, and  $UV:= \{mn:m \in U, n\in V \}\up.$ Equivalently, a pair $(U,V)$ of ultrafilters on $\nor$ is composable if and only if $s(U):=U^{-1}U$ is equal to $r(V):= VV^{-1}$.
    \item The unit space of $\sig$ is given by $\sig^{(0)}= \{U \in \sig : U \cap I(C) \neq \emptyset\}$.
\end{enumerate}
For $n\in \nor$, we write \[\mathcal{V}_n:= \{ U \in \sig: n \in U \}.\] The collection $\{\mathcal{V}_n: n \in \nor \}$ forms a basis of open bisections for a topology on $\sig$ making it an \'{e}tale groupoid. For any $n,m \in \nor$, we have $\mathcal{V}_n\mathcal{V}_m=\mathcal{V}_{nm}$ and $\mathcal{V}_n^{-1}=\mathcal{V}_{n^{\dagger}}$ (see \cite[Lemma~3.2]{germs} for more details). By \cite[Lemma~2.22]{Lawson3} (see also \cite[Propositions~2.2 and 4.4]{germs}), $\sig^{(0)}$ is a Hausdorff subspace of $\sig$.

\begin{rmks}\label{ultprop}~
\begin{enumerate}[label=(\arabic*), font=\normalfont]
     \item\label{c5.1} 
    There is a homeomorphism from the set of ultrafilters of $\nor$ that contain an element of $I(C)$ to the set of ultrafilters of the inverse subsemigroup $I(C)$, and so we often identify elements of $\Sigma^{(0)}$ with ultrafilters of $I(C)$ when convenient. See \cite[Proposition~2.2~and~Proposition~4.4]{germs} for details. Since $I(C)$ is a Boolean algebra, Stone duality implies that $\V_e$ is compact for each $e \in I(C)$. 

    \item \label{ultprop1} We used the notation $\sig$ for the groupoid of ultrafilters on $\nor$ to distinguish it from the groupoid $\Sigma$ of ultrafilters on $N(C)$ in \cite{ACCCLMRSS}. Although $\nor \subseteq N(C)$, the groupoids $\sig$ and $\Sigma$ do not necessarily have a relationship since an ultrafilter on $\nor$ is not necessarily a filter on $N(C)$. However, one can check that the various results about $\Sigma$ in \cite[Section~5.1]{ACCCLMRSS} hold true in $\sig$ using \ref{c5.1} and the properties of $\leq$ on $\nor$ which is similar to \cite[Lemmas~2.16~and~2.17]{ACCCLMRSS}. In particular, for $t\in R^{\times}$ 
    if $U \in \sig$, then $tU=\{tm: m\in U \} \in \sig$. 
\end{enumerate}
\end{rmks}

We next establish that $\sig$ is graded by showing first that each ultrafilter in $\nor$ is contained in a homogeneous component of $A$.

\begin{prop}\label{degU} If $U$ is a filter (or an ultrafilter) on $\nor$, then there exists $\alpha \in \Gamma$ such that $\dg(n)=\alpha$ for all $n\in U$.
\end{prop}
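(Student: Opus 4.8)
The plan is to reduce the statement to a single structural fact: the natural partial order on the inverse semigroup $\nor$ cannot change the degree of an element, because all the idempotents of $\nor$ are concentrated in the identity component $A_\id$. Once this is established, the filter axioms force every element of a filter to have a common degree.

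First I would check that if $s,t \in \nor$ with $s \leq t$, then $\dg(s) = \dg(t)$. By the characterisation of the partial order recalled immediately after \cref{invsem}, there is an idempotent $e$ with $s = te$. By \cref{I(C)}\ref{idem}, every idempotent of $\nor$ lies in $A_\id$, so writing $\dg(t) = \gamma$ we have $s = te \in A_\gamma A_\id \subseteq A_\gamma$. Since $s \in \nor \setminus \{0\}$ is a nonzero homogeneous element, this forces $\dg(s) = \gamma = \dg(t)$. Thus the order is degree-preserving on nonzero elements.

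Next, given a filter $U$ on $\nor$ and any two elements $m,n \in U$, the filter axiom supplies $p \in U$ with $p \leq m$ and $p \leq n$. Because $U \subseteq \nor \setminus \{0\}$, the element $p$ is nonzero, so the previous paragraph yields $\dg(m) = \dg(p) = \dg(n)$. Hence all elements of $U$ share one common degree $\alpha \in \Gamma$, which is exactly the assertion; since an ultrafilter is in particular a filter, the ultrafilter case is covered at once. There is no genuine obstacle in this argument—the only point worth isolating is that the homogeneity of the idempotents (all sitting in $A_\id$) is precisely what makes multiplication by an idempotent degree-neutral, and hence what keeps every member of a downward-directed set in the same homogeneous component.
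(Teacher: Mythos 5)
Your proof is correct and follows essentially the same route as the paper: both arguments show that the natural partial order on $\nor$ preserves degree (you via the idempotent characterisation $s=te$ with $e\in I(C)\subseteq A_\id$, the paper via $p=np^\dagger p$ together with \cref{I(C)}\ref{norminv}), and then invoke the downward-directedness of a filter. The two justifications of degree-preservation are interchangeable, so there is nothing substantive to add.
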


\begin{proof}
    Let $n,m \in U$. Since $U$ is a filter on $\nor$, there exists $p \in U$ such that $p \leq n,m$. This implies that $np^{\dagger}p = p = mp^{\dagger}p$. Using \cref{I(C)}\ref{norminv}, we have
    $\dg(n) = \dg(n)\dg(p^{\dagger})\dg(p) = \dg(m)\dg(p^{\dagger})\dg(p) =\dg(m)$.
    Hence,  $\dg(n)=\alpha$ for all $n\in U$ and  for some $\alpha\in \Gamma$.  
\end{proof}

Define $\csig: \sig \to \Gamma$ such that $\csig(U)=\dg(n)$, for any $U \in \sig$ and $n \in U$. This map is well-defined by Proposition \ref{degU} and we show it is a continuous groupoid homomorphism in Lemma \ref{cont} below.

\subsection{The groupoid \texorpdfstring{$\G$}{}}
We continue to assume that $A=\bigoplus_{\gamma\in \Gamma}$ is a $\Gamma$-graded $R$-algebra and $C\subseteq A_{\id}$ is a commutative subalgebra such that $C$ satisfies \hyperref[WT]{condition \pref{WT}} and $I(C)$ forms a set of local units for $A_{\id}$.  
Following the construction in \cite{ACCCLMRSS}, define a relation on $\sig$, the groupoid of ultrafilters on the inverse semigroup $\nor$, by 
	\begin{equation} \label{quotientaction}
	    U\simeq W \iff \text{there exists $t\in R^{\times}$ such that $U=tW$.}
	\end{equation}
It is straightforward to check that $\simeq$ is an equivalence relation. Define $\G$ as the quotient $\sig/{ \simeq}$ endowed with the quotient topology, and denote the corresponding quotient map by $q:\sig \to \G$.  We denote the equivalence class of $U\in \sig$ by $q(U)$. 

Using the techniques used in \cite[Section~5.2]{ACCCLMRSS}, one can check that the following hold:
\begin{enumerate}[label=(\arabic*), font=\normalfont]
    \item The quotient $\G:= \{q(U): U \in \sig\}$ is a groupoid with inversion given by $q(U)^{-1}=q(U^{-1})$, composable pairs $\G^{(2)}:=\{ (q(U), q(W)): (U,W) \in \sig^{(2)}   \}$, and composition given by $q(U)q(W)=q(UW)$ for all $(U,W) \in \sig^{(2)}$. We have $s(q(U))=q(s(U))$ and $r(q(U))= q(r(U))$ for all $U\in \sig$ and so $\G^{(2)}=q(\sig^{(2)})$. 
    \item The quotient map $q: \sig \to \G$ is a groupoid homomorphism and an open map that restricts to a homeomorphism of unit spaces. 
    \item The collection $\{\mathcal{V}_n: n \in \nor \}$ forms a basis of compact open bisections for the topology on $\sig$. In particular, $\sig$ is an ample groupid. 
    \item The collection $\{q(\mathcal{V}_n) : n \in \nor \}$ forms a basis of compact open bisections for the quotient topology on $\G$. In particular, $\G$ is an ample groupoid.  
\end{enumerate}

Define $\cgs: \G \to \Gamma$ by $\cgs(q(U)) = \csig(U)$, for any $q(U) \in \G$. This map is well-defined using the following proposition. 

\begin{prop}\label{degqU} If $U \in \sig$, then for any $V \in \sig$ such that $q(U)=q(V)$, we have $\csig(V) = \csig(U)$.
\end{prop}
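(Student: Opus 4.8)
The plan is to unwind the definition of the equivalence relation $\simeq$ from \cref{quotientaction} together with the definition of $\csig$, and to observe that scalar multiplication by a unit never changes the homogeneous degree of a nonzero element.

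First I would translate the hypothesis $q(U)=q(V)$ into its defining statement: by \cref{quotientaction}, $q(U)=q(V)$ means $U \simeq V$, so there exists $t \in R^\times$ with $U = tV = \{tm : m \in V\}$. Next I would choose any element $m \in V$. Since $V$ is an ultrafilter on $\nor$ we have $V \subseteq \nor \setminus \{0\}$, so $m \neq 0$, and $m$ is a homogeneous normaliser, say $m \in A_\gamma \setminus \{0\}$ with $\dg(m) = \gamma$. By the definition of $\csig$ (which is well-defined by \cref{degU}), this gives $\csig(V) = \gamma$.

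I would then examine the corresponding element of $U$. From $U = tV$ we have $tm \in U$, so it suffices to compute $\dg(tm)$. Because each $A_\gamma$ is an $R$-submodule of $A$, we have $tm \in A_\gamma$; and because $t \in R^\times$ is a unit while $m \neq 0$, the product $tm$ is again nonzero (otherwise $m = t^{-1}(tm) = 0$). Hence $tm$ is homogeneous of degree $\gamma$, so $\dg(tm) = \gamma$, and therefore $\csig(U) = \dg(tm) = \gamma = \csig(V)$, as required.

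I do not expect any serious obstacle here: the argument is essentially a one-line unwinding of definitions. The only point deserving a moment's care is verifying that $tm$ stays nonzero and remains in the same homogeneous component, which holds because multiplication by a unit scalar is an $R$-module automorphism restricting to each $A_\gamma$. The well-definedness of $\csig$ on which the statement implicitly rests is already supplied by \cref{degU}, so it does not need to be re-proved.
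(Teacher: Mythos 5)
Your proof is correct and follows essentially the same route as the paper's: both unwind $q(U)=q(V)$ into $U=tV$ for some $t\in R^\times$ and use the fact that each homogeneous component is an $R$-submodule, so scalar multiplication by a unit preserves the degree. Your extra remark that $tm$ remains nonzero is a careful touch the paper leaves implicit, but it does not change the argument.
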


\begin{proof}
    Let $V \in \sig$ such that $q(U)=q(V)$. Then $U \simeq V$ so that $U=tV$ for some $t \in R^{\times}$. Since every homogeneous component of $A$ is an $R$-submodule, we have for any $u \in U$ and $v \in V$, 
    $\csig(U) = \dg(u) = \dg(tv) = \dg(v) = \csig(V).$
\end{proof}

\subsection{The twist of a \GAQP}

The main theorem (\cref{grpdultra}) of this section is that if $(A,C)$ is a graded algebraic quasi-Cartan pair, then $(\sig,\G,\Gamma)$ is a graded discrete $R$-twist over $\G$, and $\G$ is Hausdorff. First we show that the maps $\csig$ and $\cgs$  are continuous groupoid homomorphisms. 

\begin{lem}\label{cont}
    \label{cG} Let $R$ be a commutative unital ring and $\Gamma$ be a discrete group with identity $\id$. 
    Suppose that $A=\bigoplus_{\gamma\in \Gamma}$ is a $\Gamma$-graded $R$-algebra, that $C\subseteq A_{\id}$ is a commutative subalgebra such that $C$ satisfies \hyperref[WT]{condition \pref{WT}}, that $I(C)$ forms a set of local units for $A_{\id}$, and that $A=\ls(\nor)$.
    Let $\sig$ be the groupoid of ultrafilters on $\nor$, $\G$ be the quotient of $\sig$ by the equivalence relation given by \cref{quotientaction} endowed with the quotient topology and $q:\sig \to \G$ be the quotient map. Then the maps $\csig: \sig \to \Gamma$ and $\cgs: \G \to \Gamma$ are continuous groupoid homomorphisms. Moreover, $\csig = \cgs \circ q$.
\end{lem}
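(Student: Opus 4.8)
The plan is to dispose of the identity $\csig = \cgs \circ q$ first, since it is essentially immediate from the definition of $\cgs$: for any $U \in \sig$ we have $(\cgs \circ q)(U) = \cgs(q(U)) = \csig(U)$ by the very formula defining $\cgs$ in terms of $\csig$. Having this identity at the outset is convenient because the two properties of $\cgs$ can then be bootstrapped from the corresponding properties of $\csig$, so the real content is establishing that $\csig$ is a continuous groupoid homomorphism. Note that $\csig$ and $\cgs$ are already well-defined by \cref{degU} and \cref{degqU}, respectively.

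To see that $\csig$ is a groupoid homomorphism, I observe that every pair in $\Gamma^{(2)}$ is composable since $\Gamma$ is a group, so I only need the multiplicativity $\csig(UV) = \csig(U)\csig(V)$ for composable $(U,V) \in \sig^{(2)}$. Writing $\alpha = \csig(U)$ and $\beta = \csig(V)$, I would pick any $m \in U$ and $n \in V$; then $\dg(m) = \alpha$ and $\dg(n) = \beta$ by \cref{degU}, and composability of $(U,V)$ guarantees $mn \neq 0$. Since $mn \in A_\alpha A_\beta \subseteq A_{\alpha\beta}$, we get $\dg(mn) = \alpha\beta$, and because $mn$ lies in the generating set of the upclosure $UV = \{mn : m \in U, n \in V\}\up$, another application of \cref{degU} gives $\csig(UV) = \dg(mn) = \alpha\beta$, as required. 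For continuity, since $\Gamma$ is discrete it suffices to show each fibre $\csig^{-1}(\gamma)$ is open. Using the basic open sets $\mathcal{V}_n$, if $n \in \nor \cap A_\gamma$ then every $U \in \mathcal{V}_n$ contains $n$ and hence satisfies $\csig(U) = \dg(n) = \gamma$; conversely any $U$ with $\csig(U) = \gamma$ contains some $n$, necessarily of degree $\gamma$, so that $U \in \mathcal{V}_n$. This yields $\csig^{-1}(\gamma) = \bigcup \{\mathcal{V}_n : n \in \nor \cap A_\gamma\}$, a union of basic open sets and therefore open.

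The corresponding properties for $\cgs$ then follow formally. Multiplicativity transfers through the quotient: for composable $(q(U), q(V))$ we have $q(U)q(V) = q(UV)$, so $\cgs(q(U)q(V)) = \csig(UV) = \csig(U)\csig(V) = \cgs(q(U))\cgs(q(V))$ using the homomorphism property of $\csig$ just proved. Continuity of $\cgs$ is cleanest via the universal property of the quotient topology: since $q$ is a quotient map and $\cgs \circ q = \csig$ is continuous, $\cgs$ is continuous (alternatively, one can use that $q$ is open together with $\cgs^{-1}(\gamma) = q(\csig^{-1}(\gamma))$). I do not anticipate a serious obstacle here; the only point requiring care is ensuring the chosen representatives $m \in U$, $n \in V$ in the homomorphism argument genuinely produce a nonzero product $mn$ of the expected degree $\alpha\beta$, which is exactly what the composability criterion for $\sig^{(2)}$ and the grading inclusion $A_\alpha A_\beta \subseteq A_{\alpha\beta}$ supply.
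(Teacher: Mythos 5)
Your proof is correct and follows essentially the same route as the paper: both arguments rest on \cref{degU} and the basic homogeneous bisections $\mathcal{V}_n$, the only cosmetic difference being that the paper proves continuity of $\cgs$ directly and then gets $\csig = \cgs\circ q$ as a composition, whereas you prove continuity of $\csig$ directly and then pass to $\cgs$ via the universal property of the quotient topology. Your explicit verification of multiplicativity of $\csig$ on composable pairs (which the paper only asserts) is a welcome addition and is carried out correctly.
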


\begin{proof}
    To show that $\cgs$ is continuous, fix $\alpha \in \Gamma$ and $q(U) \in \cgsi(\alpha)$. Take $m\in U$. By Lemma \ref{degU}, $\dg(m) = \csig(U) = \alpha$. Hence, $q(U) \in q(\mathcal{V}_m) \subseteq \csig^{-1}(\alpha)$ implying that $\cgs$ is continuous. Notice that $\csig = \cgs \circ q$ is a composition of continuous maps, and hence, $\csig$ is also continuous. Also, $\csig$ and $\cgs$ are groupoid homomorphisms from the groupoid structures of $\sig$ and $\G$. The last statement follows from the definition of the maps $\csig$ and $\cgs$. 

\end{proof}

It follows from \cref{cont} that the collections $\{\mathcal{V}_n: n \in \nor \}$ and $\{q(\mathcal{V}_n) : n \in \nor \}$ form a basis of homogeneous compact open bisections for the topology on $\sig$ and $\G$, respectively.

\begin{rmk}\label{Ge}
    Suppose that $(A,C)$ is a \GAQP. Then $(A_\id,C)$ is an \AQP~and following the construction in \cite{ACCCLMRSS}, we have
    \begin{enumerate}[label=(\arabic*), font=\normalfont]
        \item the groupoid of ultrafilters on the normalisers $N_{A_{\id}}(C)$ of $C$ in $A_\id$, say $\Sigma_{A_\id}$,
        \item \label{it2:Ge}the groupoid $G_{A_\id} := \Sigma_{\id}/\simeq$ where $\simeq$ is the same equivalence relation on $\sig$ associated to $\nor$. 
    \end{enumerate}
By \cite[Proposition~5.6]{ACCCLMRSS}, $G_{A_\id}$ is Hausdorff. 
\end{rmk}

\begin{lem}\label{GHaus}
    If $(A,C)$ is a \GAQP, then $\G$ is Hausdorff. Moreover, $\sig$ is also Hausdorff. 
\end{lem}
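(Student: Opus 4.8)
The plan is to deduce both statements from the ungraded Hausdorffness results of \cite{ACCCLMRSS} by slicing $\G$ and $\sig$ into their homogeneous components and translating each component onto the degree-$\id$ component, which will be identified with the ungraded groupoids $G_{A_\id}$ and $\Sigma_{A_\id}$ of \cref{Ge}. Throughout I will use the standard criterion that an ample groupoid with Hausdorff unit space and a basis of compact open bisections $\{B_i\}$ is Hausdorff if and only if $B_i \cap B_j$ is compact for every pair $i,j$ (the nontrivial implication follows by showing each basic bisection is closed, using that it is homeomorphic to an open subset of the Hausdorff unit space). I apply it to $\sig$ with basis $\{\mathcal{V}_n\}$ and to $\G$ with basis $\{q(\mathcal{V}_n)\}$; their unit spaces are Hausdorff, $\sig^{(0)}$ by \cite[Lemma~2.22]{Lawson3} and $\G^{(0)}$ since $q$ restricts to a homeomorphism of unit spaces. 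The first reduction is by degree: by \cref{degU} every ultrafilter in $\sig$ is homogeneous, so if $\dg(n)\neq\dg(m)$ then no ultrafilter contains both $n$ and $m$, whence $\mathcal{V}_n\cap\mathcal{V}_m=\emptyset$ and likewise $q(\mathcal{V}_n)\cap q(\mathcal{V}_m)=\emptyset$; thus only the case $\dg(n)=\dg(m)=:\gamma$ requires work.

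The heart of the argument is a translation moving a degree-$\gamma$ intersection to the degree-$\id$ component. Fix $n,m\in\nor$ with $\dg(n)=\dg(m)=\gamma$. Using the partial order on $\nor$ one checks the key equivalence that for any ultrafilter $U\ni n$,
\[ m\in U \iff n^{\dagger}m\in s(U), \]
the forward direction because $n^{\dagger}\in U^{-1}$ and $m\in U$, and the reverse because $nn^{\dagger}m\le m$ lies in $U=U\,s(U)$ and $U$ is up-closed. Since the source map restricts to a homeomorphism on the bisection $\mathcal{V}_n$, this equivalence shows that $s$ carries $\mathcal{V}_n\cap\mathcal{V}_m$ homeomorphically onto $\mathcal{V}_{n^{\dagger}n}\cap\mathcal{V}_{n^{\dagger}m}$. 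By \cref{I(C)}\ref{norminv} the elements $n^{\dagger}n$ and $n^{\dagger}m$ have degree $\id$, so this intersection lies in the degree-$\id$ component of $\sig$. Running the same computation after applying $q$ and quantifying over the $R^{\times}$-scaling (so that both sides acquire an existential over $t\in R^{\times}$) yields a homeomorphism $q(\mathcal{V}_n)\cap q(\mathcal{V}_m)\cong q(\mathcal{V}_{n^{\dagger}n})\cap q(\mathcal{V}_{n^{\dagger}m})$ inside the degree-$\id$ component of $\G$.

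It remains to identify these components and invoke the ungraded results. The degree-$\id$ component of $\sig$ is precisely the groupoid of ultrafilters on $N_{A_\id}(C)=\nor\cap A_\id$, namely $\Sigma_{A_\id}$, because the up-closure in $\nor$ of a degree-$\id$ element stays in degree $\id$, so the two notions of filter coincide; correspondingly the degree-$\id$ component of $\G$ is $G_{A_\id}=\Sigma_{A_\id}/{\simeq}$. Since $(A_\id,C)$ is an \AQP, \cite[Proposition~5.6]{ACCCLMRSS} (recorded in \cref{Ge}) gives that $G_{A_\id}$ is Hausdorff, and then \cite[Corollary~2.3]{ACCCLMRSS} gives that $\Sigma_{A_\id}$ is Hausdorff. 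Applying the criterion inside these Hausdorff groupoids, the intersections $\mathcal{V}_{n^{\dagger}n}\cap\mathcal{V}_{n^{\dagger}m}$ and $q(\mathcal{V}_{n^{\dagger}n})\cap q(\mathcal{V}_{n^{\dagger}m})$ are compact; transporting back through the homeomorphisms above shows that $\mathcal{V}_n\cap\mathcal{V}_m$ and $q(\mathcal{V}_n)\cap q(\mathcal{V}_m)$ are compact. By the criterion, $\G$ and $\sig$ are both Hausdorff.

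The step I expect to be the main obstacle is the translation together with this identification: one must verify that the source-map translation is genuinely a homeomorphism onto the stated intersection (not merely a bijection), and that the degree-$\id$ slices of $\sig$ and $\G$ coincide, as topological groupoids, with the ungraded objects $\Sigma_{A_\id}$ and $G_{A_\id}$ to which \cite{ACCCLMRSS} applies. The delicate point is the $R^{\times}$-scaling in the $\G$-version of the equivalence, where one quantifies over $t\in R^{\times}$ on both sides; here the without-torsion \hyperref[WT]{condition \pref{WT}} is what keeps the correspondence clean, since it forces an ultrafilter to contain at most one $R^{\times}$-translate of a given normaliser, so that passing to the $R^{\times}$-quotient does not collapse distinct pieces of the intersection.
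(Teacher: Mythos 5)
Your proof is correct, but it reaches the conclusion by a genuinely different mechanism than the paper. The paper's proof is much shorter: it uses the criterion that an \'etale groupoid with Hausdorff unit space is Hausdorff if and only if its unit space is closed (citing \cite[Lemma~2.3.2]{Sims}), observes that $\cgsi(\id)$ is closed in $\G$ because $\cgs$ is continuous and $\Gamma$ is discrete, identifies $\cgsi(\id)$ with the ungraded groupoid $G_{A_\id}$ of \cref{Ge} exactly as you do, and concludes that $\G^{(0)}=G_{A_\id}^{(0)}$ is closed in the Hausdorff \'etale groupoid $G_{A_\id}$ and hence in $\G$; Hausdorffness of $\sig$ is then quoted from \cite[Corollary~2.3]{ACCCLMRSS} rather than re-proved. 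You instead work with the compact-intersection criterion for ample groupoids and, in place of the one-line topological observation that $\cgsi(\id)$ is closed, you carry each same-degree intersection into the $\id$-component via the homeomorphism $s|_{\mathcal{V}_n}$, using the equivalence $m\in U \iff n^{\dagger}m\in s(U)$ for $U\ni n$ (which is valid: the forward direction is the composability of $U^{-1}$ with $U$, and the reverse uses $nn^{\dagger}m\le m$ together with up-closedness). Both arguments ultimately rest on the same two ingredients --- the identification $\csig^{-1}(\id)=\Sigma_{A_\id}$ and $\cgsi(\id)=G_{A_\id}$, and the ungraded Hausdorffness results for the \AQP{} $(A_\id,C)$ --- so the essential content is shared; what your version buys is that it avoids Sims's closed-unit-space lemma and the continuity of $\cgs$ (your degree-disjointness step needs only \cref{degU}), and it establishes Hausdorffness of $\sig$ directly rather than via the twist structure, at the cost of noticeably more bookkeeping, in particular the $R^{\times}$-equivariant version of the translation for $\G$, which you correctly flag and which does rely on freeness of the $R^{\times}$-action coming from \hyperref[WT]{condition \pref{WT}}.
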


\begin{proof}
    It suffices by \cite[Lemma~2.3.2]{Sims} to prove $\G^{(0)}$ is closed in $\G$. Recall from  Lemma \ref{cG} that $\cgs$ is continuous so that $\cgsi(\id)$ is closed in $\G$. Thus, it is enough to show $\G^{(0)}$ is closed in $\cgsi(\id)$. We do this by showing $\cgsi(\id)= G_{A_\id}$, which is Hausdorff by Remark \ref{Ge}. 
    First, observe that $N_{A_{\id}}(C) = \nor \cap A_{\id}$. Then, by \cref{degU}, $U \in \sig \cap A_{\id}$ if and only if $U \in \Sigma_{A_\id}$. 
    Now, if $q(U) \in G_{A_\id}$, then $\cgs(q(U)) = \id$. For the reverse containment,  if $q(U) \in \cgsi(\id)$, then $U \in \csigi(\id)$ implying that $q(U) \in G_{A_\id}$. Thus, $\cgsi(\id)= G_{A_\id}$, as required. The last statement follows from \cite[Corollary~2.3]{ACCCLMRSS}. 
\end{proof}

The following theorem follows from Lemmas \ref{cG} and \ref{GHaus} and a similar argument to \cite[Theorem~5.6]{ACCCLMRSS}. 
\begin{thm}\label{grpdultra}
    Suppose that $A$ is a $\Gamma$-graded $R$-algebra, $C\subseteq A_\id$ is a commutative subalgebra such that $C$ satisfies \hyperref[WT]{condition \pref{WT}}, $I(C)$ forms a set of local units, $C=\ls(I(C))$ and $A=\ls(\nor)$. Let $\sig$ be the groupoid of ultrafilters on $\nor$, let $\G$ be the quotient of $\sig$ by the equivalence relation given by \cref{quotientaction}, and $q:\sig \to \G$ be the quotient map. Define $i:\G^{(0)} \times R^{\times} \to \sig$ by $i(q(U),t)=tU$ for $U \in \sig^{(0)}$ and $t\in R^{\times}$. Then the sequence 
		\[
	\xymatrix@1{
		\G^{(0)} \times R^{\times} \ar[r]^-{i} ~ &~ \sig \ar[r]^-{q}  & ~\G \footnote{Do we add the maps $\cgs$ and $\csig$ in this diagram?}
	}
	\] 
is a graded discrete $R$-twist over $G$. If $(A,B)$ is a graded algebraic quasi-Cartan pair, then $\G$ is Hausdorff.
\end{thm}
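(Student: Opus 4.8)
The plan is to assemble the claim from two ingredients: the ungraded twist construction, carried out for the inverse semigroup $\nor$ exactly as \cite[Theorem~5.6]{ACCCLMRSS} carries it out for $N(C)$, and the grading maps supplied by \cref{cont}. What makes the first ingredient go through is that every structural fact about $N(C)$ used in \cite[Section~5]{ACCCLMRSS} has already been recorded for $\nor$: by \cref{invsem}, $\nor$ is an inverse semigroup whose idempotent set is $I(C)$; by \cref{I(C)}, $I(C)$ is a set of local units for both $\nor$ and $A$; and by \cref{ultprop}, the scalar action $t\cdot U=tU$ carries ultrafilters of $\nor$ to ultrafilters of $\nor$, so the analogues of the results of \cite[Section~5.1]{ACCCLMRSS} hold verbatim for $\sig$. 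Consequently the construction of $\sig$ in \cref{star}, together with the quotient $\G$ and the maps $i$, $q$ described above, is precisely the $\nor$-version of \cite[Section~5.2]{ACCCLMRSS}.

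First I would verify the three twist axioms for the sequence $\G^{(0)}\times R^{\times}\xrightarrow{i}\sig\xrightarrow{q}\G$, following \cite[Theorem~5.6]{ACCCLMRSS}. For (DT1) one checks that $i$ is injective, that $q$ is surjective, and that $i(\{q(U)\}\times R^{\times})=q^{-1}(q(U))$ for each unit; injectivity of $i$ rests on the fact that distinct scalars $t\in R^{\times}$ yield distinct ultrafilters $tU$, which in turn uses \hyperref[WT]{condition \pref{WT}} and the indecomposability of $R$ through \cref{ultprop}. For (DT2), local triviality, one uses the basic compact open bisections $\mathcal{V}_n$ to build continuous local sections of $q$, copying the ungraded argument. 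For (DT3), centrality of the image of $i$, one invokes that the scalar action $t\cdot U=tU$ on $\sig$ is central because $R$ is commutative. Since $\G$ is ample, as noted in its construction, this shows that $\sig\to\G$ is a discrete $R$-twist.

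Next I would overlay the grading. By \cref{cont}, the maps $\csig\colon\sig\to\Gamma$ and $\cgs\colon\G\to\Gamma$ are continuous groupoid homomorphisms satisfying $\csig=\cgs\circ q$, which is exactly the commutativity of the diagram defining a graded discrete twist; moreover $i$ maps into $\csig^{-1}(\id)$, since for $U\in\sig^{(0)}$ the ultrafilter $U$ meets $I(C)\subseteq A_\id$ and scalars preserve degree, so \cref{degU} gives $\csig(tU)=\id$. Hence the sequence together with $\csig$ and $\cgs$ is a graded discrete $R$-twist over $\G$. Finally, when $(A,C)$ is a \GAQP, \cref{GHaus} supplies that $\G$ (and $\sig$) is Hausdorff, which completes the statement.

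I expect the main obstacle to be the faithful transcription of the local-triviality axiom (DT2) and of the injectivity of $i$ from the $N(C)$-setting of \cite{ACCCLMRSS} to the $\nor$-setting: one must confirm that restricting to homogeneous normalisers neither disturbs the local-bisection basis $\{\mathcal{V}_n\}$ nor spoils the well-definedness of the scalar action on ultrafilters. The homogeneity bookkeeping---that the degree is constant on each ultrafilter (\cref{degU}) and is respected by the quotient (\cref{degqU})---is what guarantees these steps cause no trouble, and since it has no counterpart in the ungraded setting it is the part that deserves the most care.
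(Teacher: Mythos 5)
Your proposal is correct and takes essentially the same route as the paper: the paper's own proof is a one-line appeal to \cref{cont} and \cref{GHaus} together with ``a similar argument to \cite[Theorem~5.6]{ACCCLMRSS}'', which is exactly the decomposition you spell out (twist axioms via the $\nor$-version of the ungraded construction, grading via $\csig=\cgs\circ q$, Hausdorffness via \cref{GHaus}). Your added bookkeeping about degrees being constant on ultrafilters and respected by the quotient is the content of \cref{degU} and \cref{degqU}, so nothing new is needed.
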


We recall that since $C$ satisfies without torsion condition, \cite[Th\'{e}or\`{e}me~1]{isophi} shows that there is an isomorphism $\phi: C \to A_R(\Sigma^{(0)})$ that satisfies
\begin{equation}\label{phi}
    \phi(e)= \1_{\V_e} \text{~~for all $e \in I(C)$.}
\end{equation}

\section{The Graded Isomorphism \texorpdfstring{$A \cong_{gr} \TSA$}{}} \label{sectioniso}

Suppose that $A$ is an $R$-algebra graded by a group $\Gamma$ and $(A,C)$ is a graded algebraic quasi-Cartan pair. Let $\sig$ be the groupoid of ultrafilters on $\nor$ and denote by $C(\sig, R)$ the $R$-module of continuous (or equivalently, locally constant) functions from $\sig$ to $R$ with pointwise operations. In this section, we first build a map from $\bigcup_{\gamma\in \Gamma} A_\gamma$  using both the faithful conditional expectation $P: A_\id \to C$ that is implemented by idempotents and the isomorphism $\phi: C \to A_R(\sig^{(0)})$ from \cref{phi} that satisfies $\phi(e)= \1_{\V_e}$ for all $e \in I(C)$. Then we extend this map by defining it on all of $A$ and prove that it is a graded isomorphism of $A$ onto the twisted Steinberg algebra $\TSA$. The results and proofs in this section are similar to \cite[Section~6]{ACCCLMRSS} but extra care is needed since the groupoids we have are different from \cite{ACCCLMRSS} and the faithful conditional expectation we have is a mapping from $A_\id$ to $C$ and not from all of $A$, hence the reason for building the map first in $A_\id$ and then extending it to all of $A$.

\begin{prop}\label{subhat}
    Suppose that $(A,C)$ is a \GAQP, and let $\sig$ and $\G$ be the groupoids constructed in the previous section. Let $\phi: C \to A_R(\sig^{(0)})$ be the isomorphism from \cref{phi} that satisfies $\phi(e)= \1_{\V_e}$ for all $e \in I(C)$. For each $a_\alpha \in \bigcup_{\gamma\in \Gamma} A_\gamma$, there is a function $\widehat{a_\alpha }: \sig \to R$ defined by 
    \begin{equation*}
        \widehat{a_\alpha }(U) := \begin{cases}
        \phi(P(n^{\dagger}a_\alpha ))(s(U)) & \text{if $\csig(U)= \alpha $ and $n\in U$,} \\
        0 & \text{otherwise}.
        \end{cases}
    \end{equation*}
Moreover, 
\begin{enumerate}[label=(\alph*), font=\normalfont]
    \item $\widehat{a_\alpha}$ is continuous;
    \item the map $a_\alpha \mapsto \widehat{a_\alpha}$ from $\bigcup_{\gamma\in \Gamma} A_\gamma$ to $C(\sig, R)$ is $R$-linear;
    \item \label{inj} the map $a_\alpha \mapsto \widehat{a_\alpha}$ from $\bigcup_{\gamma\in \Gamma} A_\gamma$ to $C(\sig, R)$ is injective;
    \item $\widehat{a_\alpha}(tU)=t^{-1}\widehat{a_\alpha}(U)$ for every $t\in R^{\times}$ and $U \in \sig$;
    \item for $c\in C$, we have $\widehat{c}|_{\sig^{(0)}}$, and $\spp(\widehat{c}) \subseteq i(G^{(0}) \times R^{\times}$.
\end{enumerate}
\end{prop}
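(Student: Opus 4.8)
The plan is to first show the formula is well defined, then read off (a), (b), (d) quickly, treat injectivity (c) as the real work, and obtain (e) by reduction to the ungraded theory.

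For well-definedness I would fix $U$ with $\csig(U)=\alpha$ and take $n,n'\in U$; note first that $n^{\dagger}a_\alpha\in A_{\alpha^{-1}}A_\alpha\subseteq A_\id$ by \cref{I(C)}\ref{norminv}, so $P$ applies. Choosing $p\in U$ with $p\le n,n'$ and writing $p=ne$ for an idempotent $e\in I(C)$ (the description of $\le$ following \cref{invsem}), I get $p^{\dagger}a_\alpha=e\,n^{\dagger}a_\alpha$, hence $P(p^{\dagger}a_\alpha)=eP(n^{\dagger}a_\alpha)$ and $\phi(P(p^{\dagger}a_\alpha))=\1_{\V_e}\,\phi(P(n^{\dagger}a_\alpha))$ by \cref{phi}. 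Since $p\in U$ forces $p^{\dagger}p=(n^{\dagger}n)e\in s(U)$ with $p^{\dagger}p\le e$, upward closure gives $e\in s(U)$, i.e. $s(U)\in\V_e$; evaluating at $s(U)$ then yields $\phi(P(p^{\dagger}a_\alpha))(s(U))=\phi(P(n^{\dagger}a_\alpha))(s(U))$, and the same for $n'$. With this in hand, (a), (b), (d) are routine: on the basic open set $\V_n$ the function equals $U\mapsto\phi(P(n^{\dagger}a_\alpha))(s(U))$, a composition of the continuous source map with a locally constant element of $A_R(\sig^{(0)})$, so $\widehat{a_\alpha}$ is locally constant; $R$-linearity (degreewise) is inherited from that of $P$ and $\phi$; and for (d) one checks $(tn)^{\dagger}=t^{-1}n^{\dagger}$ and $s(tU)=s(U)$ (with $tU\in\sig$ by \cref{ultprop}\ref{ultprop1}), so $\widehat{a_\alpha}(tU)=\phi(P(t^{-1}n^{\dagger}a_\alpha))(s(U))=t^{-1}\widehat{a_\alpha}(U)$.

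Injectivity \ref{inj} is the crux, and I would prove it in three steps, the degree bookkeeping doing genuine work. \textbf{Step 1.} Assume $\widehat{a_\alpha}=0$ and fix $\mu\in\nor\cap A_\alpha$. As $U$ ranges over the bisection $\V_\mu$, the source $s(U)$ ranges over all of $\V_{\mu^{\dagger}\mu}=s(\V_\mu)$; since $P(\mu^{\dagger}a_\alpha)=(\mu^{\dagger}\mu)P(\mu^{\dagger}a_\alpha)$, the function $\phi(P(\mu^{\dagger}a_\alpha))$ is supported in $\V_{\mu^{\dagger}\mu}$, so it vanishes identically and $P(\mu^{\dagger}a_\alpha)=0$ for every $\mu\in\nor\cap A_\alpha$. \textbf{Step 2.} Fix $\nu\in\nor\cap A_\alpha$ and set $b:=\nu^{\dagger}a_\alpha\in A_\id$. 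If $b\ne 0$, faithfulness of $P$ on $A_\id$ produces $\eta\in\nor\cap A_\id$ with $P(\eta b)\ne 0$; but $\nu\eta^{\dagger}\in\nor\cap A_\alpha$ and $P((\nu\eta^{\dagger})^{\dagger}a_\alpha)=P(\eta b)\ne 0$ contradicts Step 1, so $\nu^{\dagger}a_\alpha=0$ for all $\nu\in\nor\cap A_\alpha$. \textbf{Step 3.} Write $a_\alpha=\sum_{i=1}^{k}t_in_i$ with $n_i\in\nor\cap A_\alpha$ (\cref{I(C)}\ref{homoelement}); Step 2 gives $r_ia_\alpha=n_i(n_i^{\dagger}a_\alpha)=0$ for the range idempotents $r_i:=n_in_i^{\dagger}$. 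Since $n_i=r_in_i$, the join $f:=\bigvee_i r_i\in I(C)$ satisfies $fa_\alpha=a_\alpha$; expanding $f$ by inclusion--exclusion as a signed sum of products $\prod_{i\in S}r_i$, each of which annihilates $a_\alpha$ (the idempotents of $C$ commuting), gives $fa_\alpha=0$, whence $a_\alpha=0$. The main obstacle is precisely this collapse of the cancellations among the $n_i$, and the inclusion--exclusion here is the computation promised earlier in the paper.

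Finally, for (e) I would use that $c\in C\subseteq A_\id$, so $\widehat{c}$ is supported in $\csig^{-1}(\id)$, which is precisely the groupoid $\Sigma_{A_\id}$ of ultrafilters on $N_{A_\id}(C)=\nor\cap A_\id$ from \cref{Ge}, with $\sig^{(0)}=\Sigma_{A_\id}^{(0)}$ and $\G^{(0)}=G_{A_\id}^{(0)}$ (as identified in the proof of \cref{GHaus}). On this set the defining formula for $\widehat{c}$ is verbatim the ungraded construction of \cite[Section~6]{ACCCLMRSS} applied to the AQP $(A_\id,C)$, so both claims --- $\widehat{c}|_{\sig^{(0)}}=\phi(c)$ and $\spp(\widehat{c})\subseteq i(\G^{(0)}\times R^{\times})$ --- are inherited directly. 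The restriction statement may also be checked by hand: for $U\in\sig^{(0)}$ containing an idempotent $e$, taking $n=e$ gives $\widehat{c}(U)=\phi(P(ec))(U)=\phi(ec)(U)=\1_{\V_e}(U)\,\phi(c)(U)=\phi(c)(U)$.
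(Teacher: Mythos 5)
Your proof is correct and follows essentially the same route as the paper's: the injectivity argument proceeds by the identical three steps (vanishing of $P(n^{\dagger}a_\alpha)$ for homogeneous normalisers of the right degree, faithfulness of $P$ on $A_\id$ to conclude $n^{\dagger}a_\alpha=0$, then the inclusion--exclusion idempotent $e=\bigvee_i n_in_i^{\dagger}$), with only cosmetic differences in how Step~1 and the identity $ea_\alpha=a_\alpha$ are verified. The remaining parts, which the paper defers to \cite[Proposition~6.1]{ACCCLMRSS}, are filled in correctly.
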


\begin{proof}
    Except for \ref{inj}, the same arguments used in the proof of \cite[Proposition~6.1]{ACCCLMRSS} can be applied in our setting as discussed in Remark \ref{ultprop}\ref{ultprop1}. We show \ref{inj}, which requires some additional work since we only have $P$ defined and faithful on $A_{\id}$.  
    We do this by showing that if $\widehat{a_\alpha}=0$, then $a_\alpha=0$. So for all $U \in \sig$, we have $\widehat{a_\alpha}(U)=0$. We claim that $\phi(P(na_{\alpha}))=0$ for all $n \in A_{\alpha^{-1}} \cap \nor$. Fix $n \in A_{\alpha^{-1}} \cap \nor$ and $U \in \Sigma^{(0)}$. We prove our claim by considering two cases. 
    First, suppose $nn^{\dagger} \in U$.  Then $U \in s(\mathcal{V}_{n^{\dagger}})$ and hence there exists an ultrafilter $W \in \mathcal{V}_{n^{\dagger}}$ such that $s(W)=U$. Thus we have
    \[
    \phi(P(na_{\alpha}))(U) = \phi(P(na_{\alpha}))(s(W))=\widehat{a}(W) =0.
    \]
    Now, suppose that $nn^{\dagger}\notin U$.
    Then $U\notin V_{nn^{\dagger}}$, and since $P$ is a conditional expectation, observe that
    \[
    \phi(P(na_{\alpha}))(U) = \phi(nn^{\dagger})(U)\phi(P(na_{\alpha}))(U)=1_{\mathcal{V}_{nn^{\dagger}}}(U)\phi(P(na_{\alpha}))(U)=0,\]
as needed. 

    Here is where our argument deviates from \cite{ACCCLMRSS}. Let $m \in A_{\id} \cap \nor$ and $n \in A_{\alpha^{-1}} \cap \nor$. Then $mn \in A_{\alpha^{-1}} \cap \nor$ and by our claim, $\phi(P(mna_\alpha))=0$. Since $\phi$ is injective, $P(mna_\alpha)=0$. Moreover, since $P$ is faithful on $A_\id$, we deduce that $na_\alpha=0$ for all $n\in A_{\alpha^{-1}} \cap \nor$. 

    By \cref{I(C)}\ref{homoelement}, we can write $a_\alpha= \sum\limits_{i=1}^{l} t_in_i$ where $t_i \in R\setminus \{0\}$ and $n_i\in A_\alpha \cap \nor$ for each $i$. To prove that $a_\alpha =0$, we consider 
    \[ e:= \sum\limits_{\ind = 1}^{l} n_{\ind}n_{\ind}^{\dagger} - \sum\limits_{i \leq \ind < \indd \leq l} n_{\ind}n_{\ind}^{\dagger} n_{\indd}n_{\indd}^{\dagger} +  - \ldots + (-1)^{l+1} n_{1}n_{1}^{\dagger}n_{2}n_{2}^{\dagger}\cdots n_{l}n_{l}^{\dagger}\]
    and show that $ 0= ea_\alpha = a_\alpha$. Observe that 
    \[ ea_\alpha = \sum\limits_{\ind = 1}^{l} n_{\ind}n_{\ind}^{\dagger}a_\alpha - \sum\limits_{i \leq \ind < \indd \leq l} n_{\ind}n_{\ind}^{\dagger} n_{\indd}n_{\indd}^{\dagger}a_\alpha + \ldots + (-1)^{l+1} n_{1}n_{1}^{\dagger}n_{2}n_{2}^{\dagger}\cdots n_{l}n_{l}^{\dagger}a_\alpha  \]
and each term contains $n_i^{\dagger}a_\alpha$ for some $1 \leq i \leq l$, which is equal to $0$ since $n_i^{\dagger} \in A_{\alpha^{-1}} \cap \nor$. Hence, $ea_\alpha=0$. Now, to show that $ea_\alpha = a_\alpha$, we compute the first few terms. 

        \begin{align*}
\left(\sum\limits_{\ind=1}^{l} n_{\ind}n_{\ind}^{\dagger}\right)a_\alpha
                &= \sum_{i,\ind=1}^{l} t_in_{\ind}n_{\ind}^{\dagger}n_i 
                =\sum\limits_{i=1}^{l}t_in_i + \sum\limits_{\substack{i,\ind=1\\ i\neq \ind}}^{l} t_i n_{\ind}n_{\ind}^{\dagger}n_i \\
\left(- \sum\limits_{1 \leq \ind < \indd \leq l} n_{\ind}n_{\ind}^{\dagger} n_{\indd}n_{\indd}^{\dagger}  \right) a_\alpha
               &= - \sum\limits_{\substack{i,\ind=1\\ i\neq \ind}}^{l} t_i n_{\ind}n_{\ind}^{\dagger}n_i - \sum\limits_{\substack{i, \ind, \indd=1 \\ i \neq \ind, \indd \\ \ind < \indd }}^{l} t_i n_{\ind}n_{\ind}^{\dagger} n_{\indd}n_{\indd}^{\dagger} n_i 
                \end{align*}
               
\begin{align*}             \left( \sum\limits_{1 \leq \ind < \indd < \inddd \leq l} n_{\ind}n_{\ind}^{\dagger} n_{\indd}n_{\indd}^{\dagger}n_{\inddd}n_{\inddd}^{\dagger} \right) a_\alpha
                &= \sum\limits_{\substack{i, \ind, \indd=1 \\ i \neq \ind, \indd \\ \ind < \indd }}^{l} t_i n_{\ind}n_{\ind}^{\dagger} n_{\indd}n_{\indd}^{\dagger} n_i + \sum\limits_{\substack{i, \ind, \indd=1 \\ i \neq \ind, \indd \\ \ind < \indd }}^{l} t_i n_{\ind}n_{\ind}^{\dagger} n_{\indd}n_{\indd}^{\dagger}n_{\inddd}n_{\inddd}^{\dagger} n_i.
    \end{align*}
 Notice that terms cancel out so we are left with $ea_\alpha= \sum\limits_{i=1}^{l}t_in_i = a_\alpha$. But $ea_\alpha=0$ as shown previously, so $a_\alpha=0$, as required.     
\end{proof}

Since $A=\bigoplus_{\alpha\in \Gamma}A_\alpha$, we can extend the map we have in Proposition \ref{subhat} to all of $A$ as in the following corollary.
\begin{corollary}\label{hatmap}
    For each $a = \sum\limits_{\alpha\in \Gamma}a_\alpha \in A$ and $U \in \sig$, define a function $\widehat{a}:\sig\to R$ such that $$\widehat{a}(U)= \sum\limits_{\alpha\in \Gamma} \widehat{a_\alpha}(U).$$
    Then the following hold:
    \begin{enumerate}[label=(\alph*), font=\normalfont]
    \item \label{hatlinear} $\widehat{a}$ is continuous;
    \item the map $a \mapsto \widehat{a}$ from $A$ to $C(\sig, R)$ is $R$-linear;
    \item the map $a \mapsto \widehat{a}$ from $A$ to $C(\sig, R)$ is injective;
    \item $\widehat{a}(tU)=t^{-1}\widehat{a}(U)$ for every $t\in R^{\times}$ and $U \in \sig$.
\end{enumerate}
\end{corollary}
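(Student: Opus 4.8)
The plan is to reduce each of the four assertions to the corresponding property of the homogeneous map $a_\alpha \mapsto \widehat{a_\alpha}$ already established in \cref{subhat}. Since every $a \in A$ has a unique expression $a = \sum_{\alpha \in \Gamma} a_\alpha$ as a \emph{finite} sum with $a_\alpha \in A_\alpha$, the defining sum $\widehat{a}(U) = \sum_{\alpha} \widehat{a_\alpha}(U)$ has only finitely many nonzero terms and so is well defined; moreover the assignment $a \mapsto (a_\alpha)_\alpha$ is additive and $R$-homogeneous because the grading decomposition is unique.

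Continuity, $R$-linearity, and contravariance are then routine. For continuity I would note that $\widehat{a}$ is a finite sum of the continuous functions $\widehat{a_\alpha}$ and hence continuous. For $R$-linearity I would use $a+b = \sum_\alpha (a_\alpha + b_\alpha)$ and $ra = \sum_\alpha (r a_\alpha)$ together with the $R$-linearity on homogeneous components, applying the latter degree by degree. For contravariance, I would first record that $tU \in \sig$ by \cref{ultprop}\ref{ultprop1} and that $\csig(tU) = \csig(U)$ for $t \in R^{\times}$: if $n \in U$ then $tn \in tU$ lies in the same component $A_{\csig(U)}$, since each $A_\alpha$ is an $R$-submodule and $tn \neq 0$ as $t$ is a unit; applying the homogeneous contravariance from \cref{subhat} to each summand and factoring out $t^{-1}$ then gives the claim.

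The only step with any content is injectivity, and the key observation I would isolate first is that the functions $\{\widehat{a_\alpha}\}_\alpha$ have \emph{pairwise disjoint supports}. Indeed, by construction $\widehat{a_\alpha}(U) = 0$ whenever $\csig(U) \neq \alpha$, so $\spp(\widehat{a_\alpha}) \subseteq \csig^{-1}(\alpha)$, and the clopen sets $\csig^{-1}(\alpha)$ are pairwise disjoint since $\csig$ is a function. Consequently, for any $U$ with $\csig(U) = \beta$ every term of $\widehat{a}(U) = \sum_\alpha \widehat{a_\alpha}(U)$ with $\alpha \neq \beta$ vanishes, leaving $\widehat{a}(U) = \widehat{a_\beta}(U)$. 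Hence if $\widehat{a} = 0$ then $\widehat{a_\beta}$ vanishes on $\csig^{-1}(\beta)$; as it already vanishes off $\csig^{-1}(\beta)$ by construction, we get $\widehat{a_\beta} = 0$ on all of $\sig$, and therefore $a_\beta = 0$ by the injectivity in \cref{subhat}\ref{inj}. Since $\beta$ is arbitrary, $a = \sum_\beta a_\beta = 0$. I expect no genuine obstacle here: the delicate inclusion--exclusion argument underpinning injectivity has already been carried out on homogeneous elements in \cref{subhat}, and the disjoint-support remark is exactly what lets the global map inherit it; everything else is finite-sum bookkeeping.
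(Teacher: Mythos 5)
Your proposal is correct and matches the paper's intent: the paper offers no separate proof, treating the corollary as the componentwise extension of \cref{subhat} via the direct-sum decomposition, which is exactly what you carry out. Your disjoint-support observation for injectivity is the right way to fill in the one nontrivial detail, and the rest is the finite-sum bookkeeping you describe.
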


For the proof of the main theorem (\cref{gradediso}) in this section, we use the following lemma.   The proof is almost exactly the same as the proof of \cite[Lemma~6.3~and~Proposition~6.4]{ACCCLMRSS} but with the appropriate notion of an ultrafilter on $\nor$ and the properties of $\nor$.  We again omit the details. 
\begin{lem}\label{c6}
Let $(A,C)$ be a \GAQP, and let $\sig$ and $\G$ be the groupoids constructed above. 
\begin{enumerate}[label=(\arabic*), font=\normalfont]
    \item \label{nhat} For $n\in \nor$ and $U \in \sig$, we have 
    \begin{equation*}
        \widehat{n}(U):= \begin{cases}
            t^{-1} & \text{if $U\in \V_{tn}$ for some $t\in R^{\times}$},\\
            0 & \text{otherwise}.
            
        \end{cases}
    \end{equation*}
    \item \label{surj}  For any $f \in \TSA$, there exist $n_1, \ldots, n_M \in \nor$ and $t_1, \ldots. t_M \in R$ such that $$f=\sum\limits_{j=1}^{M} t_j\widehat{n_j}.$$
\end{enumerate}
\end{lem}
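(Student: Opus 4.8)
The plan is to derive both parts from the explicit description of the map $\widehat{\,\cdot\,}$ recorded in \cref{subhat}, together with the combinatorics of the basis $\{\V_n : n \in \nor\}$ of compact open bisections of $\sig$. For part~\ref{nhat} I would apply the defining formula of \cref{subhat} with $a_\alpha = n$, which is legitimate since $n \in \nor$ is homogeneous of degree $\alpha := \dg(n)$. Thus $\widehat{n}(U) = 0$ whenever $\csig(U) \neq \alpha$, and $\widehat{n}(U) = \phi(P(m^{\dagger} n))(s(U))$ for an arbitrary $m \in U$ when $\csig(U) = \alpha$, this value being independent of the chosen $m$ by \cref{subhat}. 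When $\csig(U) \neq \alpha$ both sides of the claimed formula vanish, consistently with $\V_{tn} \subseteq \csig^{-1}(\alpha)$ (units do not change degree). The parallel case is a direct computation: if $U \in \V_{tn}$, then $tn \in U$ and I would take $m = tn$, so $m^{\dagger} = t^{-1} n^{\dagger}$ and $m^{\dagger} n = t^{-1} n^{\dagger} n \in t^{-1} I(C)$; since $P|_{C} = \ID_C$, $\phi(n^{\dagger} n) = \1_{\V_{n^{\dagger} n}}$, and $n^{\dagger} n = m^{\dagger} m \in s(U)$, I obtain $\widehat{n}(U) = t^{-1}\1_{\V_{n^{\dagger} n}}(s(U)) = t^{-1}$. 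Here I would first record, using \hyperref[WT]{condition \pref{WT}} and the freeness of the $R^{\times}$-action on $\sig$ from \cref{ultprop}\ref{ultprop1}, that the scalar $t$ with $tn \in U$ is unique, so the right-hand side is well defined.

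The main obstacle is the remaining subcase of part~\ref{nhat}: $\csig(U) = \alpha$ yet $tn \notin U$ for every $t \in R^{\times}$, where I must prove $\phi(P(m^{\dagger} n))(s(U)) = 0$. Here $m^{\dagger} n \in \nor \cap A_{\id} = N_{A_{\id}}(C)$ by \cref{I(C)}\ref{norminv}, and since $P$ is implemented by idempotents I may write $P(m^{\dagger} n) = e(m^{\dagger} n) = (m^{\dagger} n)e$ for some $e \in I(C)$. I would argue by contradiction. Evaluating $\phi$ through $\phi(g) = \1_{\V_g}$ on idempotents, a nonzero value would force the germ of $m^{\dagger} n$ at $s(U)$ to be scalar; shrinking the implementing idempotent within $s(U)$ and invoking \hyperref[WT]{condition \pref{WT}} to ensure invertibility, this produces a unit $t$ and an idempotent $g \in s(U)$ with $m^{\dagger} n g = t^{-1} g$, whence (absorbing the range idempotents of $m$) the key identity $m g = (tn) g$ — the homogeneous analogue of the computation in \cite[Lemma~6.3]{ACCCLMRSS}, now supported by \cref{I(C)}. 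Since $m \in U$ and $g \in s(U)$ yield $m g \in U$, and $(tn)g \leq tn$, upward closure of the ultrafilter $U$ gives $tn \in U$, contradicting the standing assumption. This bookkeeping step, reconciling the idempotent-implemented expectation on $A_{\id}$ with the ultrafilter structure of $\nor$, is where the bulk of the care lies.

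For part~\ref{surj} I would first decompose $f \in \TSA$ into its homogeneous components and apply \cref{spannorm}\ref{homofunction} to each, writing $f = \sum_B r_B \til_B$ with each $B$ a compact open bisection of $\sig$ and $r_B \in R$. Since $\{\V_n : n \in \nor\}$ is a basis of compact open bisections of the ample Hausdorff (by \cref{GHaus}) groupoid $\sig$, each such $B$ is a finite disjoint union $B = \bigsqcup_j \V_{n_j}$, and disjointness of the images forces $\til_B = \sum_j \til_{\V_{n_j}}$. Finally, comparing the piecewise formula for $\til_{\V_n}$ with part~\ref{nhat} — using $t\,\V_n = \V_{tn}$ — shows $\til_{\V_n} = \widehat{n}$, so substituting back expresses $f$ as a finite $R$-combination $\sum_j t_j \widehat{n_j}$ with $n_j \in \nor$, as required.
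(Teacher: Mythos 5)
Your strategy is the paper's own: the paper proves \cref{c6} by transporting the proofs of Lemma~6.3 and Proposition~6.4 of \cite{ACCCLMRSS} to the inverse semigroup $\nor$, and your reconstruction follows that template. Part~\ref{surj} is fine: homogeneous decomposition via \cref{gtsa}, then \cref{spannorm}\ref{homofunction}, then disjointification of a compact open bisection of $\sig$ into basic sets $\V_{n_j}$ (legitimate by Stone duality on $\sig^{(0)}$, since compact open subsets of a bisection are pulled back from $\V_e$'s, $e\in I(C)$), and finally $\til_{\V_n}=\widehat{n}$ from part~\ref{nhat} together with $t\V_n=\V_{tn}$. The first two cases of part~\ref{nhat} are also correct as written.

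The one genuine gap is in the remaining case of part~\ref{nhat}, where you must show that a nonzero value of $\phi(P(m^{\dagger}n))$ at $s(U)$ forces $tn\in U$ for a \emph{unit} $t$. You attribute the invertibility of the scalar to \hyperref[WT]{condition \pref{WT}}, but that condition only rules out torsion: it does not prevent a nonzero non-unit coefficient of an idempotent (e.g.\ $R=\Z$ and the element $2e$), so as stated this step does not go through. The invertibility in fact comes from the normaliser structure of the expectation: $c:=P(m^{\dagger}n)=m^{\dagger}ne$ is a product of elements of $N(C)$, hence lies in $C\cap N(C)$, and for such $c$ one has $g:=cc^{\dagger}\in I(C)$ with $gc=c$, so that $\phi(c)$ vanishes off $\V_g$ and takes values in $R^{\times}$ on $\V_g$ (this is the content of the relevant lemma on $C\cap N(C)$ in \cite{ACCCLMRSS}, and is what their Lemma~6.3 actually invokes). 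Once the scalar is known to be a unit $t^{-1}$ with $m^{\dagger}ng=t^{-1}g$ for an idempotent $g\in s(U)$, your closing computation is correct: $mg=t(mm^{\dagger})ng\le tn$, $mg\in U$, and upward closure of $U$ yields $tn\in U$, the desired contradiction.
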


\begin{thm}\label{gradediso}
    Suppose that $(A,C)$ is a \GAQP. Let $\sig$ and $\G$ be the groupoids constructed in Section \ref{homogrpds}. Then the map $\varphi: a \mapsto \widehat{a}$ from $A$ to $C(\sig,R)$ defined in Corollary \ref{hatmap} is a graded isomorphism of $A$ onto $\TSA$ that takes $C$ to $A_R(\G^{(0)}; q^{-1}(\G^{(0)}))$.
\end{thm}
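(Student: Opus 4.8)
The plan is to check, in turn, that $\varphi$ (i) lands in $\TSA$, (ii) respects the grading and is multiplicative, (iii) is bijective, and (iv) carries $C$ onto the diagonal $A_R(\G^{(0)};q^{-1}(\G^{(0)}))$. Most of the analytic content is already packaged in \cref{hatmap} and \cref{c6}, so the task is largely to assemble these pieces. For membership, \cref{hatmap} already gives that each $\widehat{a}$ is continuous and $R^{\times}$-contravariant, so only compactness of $q(\spp(\widehat{a}))$ remains. Since $A=\ls(\nor)$, I would write $a=\sum_j t_j n_j$ with $n_j\in\nor$, use $R$-linearity to get $\widehat{a}=\sum_j t_j\widehat{n_j}$, and invoke \cref{c6}\ref{nhat} to identify $\widehat{n_j}=\til_{\V_{n_j}}$, whose $G$-support is the compact open bisection $q(\V_{n_j})$. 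Then $\spg(\widehat{a})$ is a clopen subset of the compact set $\bigcup_j q(\V_{n_j})$, hence compact, so $\widehat{a}\in\TSA$.

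Next I would establish the grading and multiplicativity. For $a_\gamma\in A_\gamma$, the defining formula of $\widehat{a_\gamma}$ in \cref{subhat} forces $\spp(\widehat{a_\gamma})\subseteq\csig^{-1}(\gamma)$, so that $\spg(\widehat{a_\gamma})\subseteq\cgs^{-1}(\gamma)=\G_\gamma$ using $\csig=\cgs\circ q$ from \cref{cont}; thus $\varphi(A_\gamma)\subseteq\TSA_\gamma$. For multiplicativity, both $(a,b)\mapsto\widehat{ab}$ and $(a,b)\mapsto\widehat{a}*\widehat{b}$ are $R$-bilinear, so by $A=\ls(\nor)$ it suffices to verify $\widehat{nm}=\widehat{n}*\widehat{m}$ for $n,m\in\nor$ (here $nm\in\nor$ by \cref{I(C)}\ref{closed}). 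Using $\widehat{n}=\til_{\V_n}$ from \cref{c6}\ref{nhat}, the groupoid identity $\V_n\V_m=\V_{nm}$ from \cref{star}, and the multiplication rule $\til_B*\til_D=\til_{BD}$ for compact open bisections of $\sig$, one gets $\widehat{n}*\widehat{m}=\til_{\V_n}*\til_{\V_m}=\til_{\V_{nm}}=\widehat{nm}$. Once $\varphi$ is known to be bijective and to satisfy $\varphi(A_\gamma)\subseteq\TSA_\gamma$ for all $\gamma$, comparing the two direct-sum decompositions forces $\varphi(A_\gamma)=\TSA_\gamma$, so $\varphi$ is a graded isomorphism.

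Bijectivity is quick: injectivity of $a\mapsto\widehat{a}$ on all of $A$ is the injectivity clause of \cref{hatmap}, and surjectivity is \cref{c6}\ref{surj}, which writes any $f\in\TSA$ as $\sum_j t_j\widehat{n_j}=\varphi\!\left(\sum_j t_j n_j\right)$. For the final assertion about $C$, one inclusion follows from \cref{subhat}: for $c\in C$ the function $\widehat{c}$ is supported in $q^{-1}(\G^{(0)})$ and restricts on $\sig^{(0)}$ to $\phi(c)$, which together with membership in $\TSA$ gives $\varphi(C)\subseteq A_R(\G^{(0)};q^{-1}(\G^{(0)}))$. Conversely, given $f$ in the diagonal, its restriction $f|_{\sig^{(0)}}$ lies in $A_R(\sig^{(0)})$, so surjectivity of $\phi$ from \cref{phi} produces $c\in C$ with $\widehat{c}|_{\sig^{(0)}}=\phi(c)=f|_{\sig^{(0)}}$; since $\widehat{c}$ and $f$ are both $R^{\times}$-contravariant and supported in $q^{-1}(\G^{(0)})$, and every point there has the form $t\cdot U$ with $t\in R^{\times}$ and $U\in\sig^{(0)}$, agreement on $\sig^{(0)}$ propagates to all of $q^{-1}(\G^{(0)})$, giving $f=\varphi(c)$.

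I expect the main obstacle to be the multiplicativity step, and in particular the twisted multiplication identity $\til_B*\til_D=\til_{BD}$ together with the reduction to homogeneous normalisers: this is where the groupoid structure of $\sig$ (the equalities $\V_n\V_m=\V_{nm}$ and $\widehat{n}=\til_{\V_n}$) must be used carefully, since the $R^{\times}$-twisting introduces unit factors that have to cancel correctly. The remaining delicacy is the second inclusion for $\varphi(C)$, where contravariance is used to extend agreement from $\sig^{(0)}$ to all of $q^{-1}(\G^{(0)})$; everything else is bookkeeping built directly on \cref{hatmap} and \cref{c6}.
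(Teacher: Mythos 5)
Your proposal is correct and follows essentially the same route as the paper: membership in $\TSA$ via linearity and $\widehat{n}=\til_{\V_n}$, injectivity from \cref{hatmap}, surjectivity from \cref{c6}, and the grading via $\spp(\widehat{a_\alpha})\subseteq\csig^{-1}(\alpha)$ together with $\csig=\cgs\circ q$. The only difference is that you spell out the multiplicativity ($\til_{\V_n}*\til_{\V_m}=\til_{\V_{nm}}$) and the identification of $\varphi(C)$ with the diagonal, which the paper delegates to the argument of \cite[Theorem~6.6]{ACCCLMRSS}; your versions of those steps are the standard ones and are sound.
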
 

\begin{proof}
    We first show that $\widehat{a} \in \TSA$ for each $a\in A$. By Remark \ref{c6}\ref{nhat}, if $n\in \nor$, then $\widehat{n} \in \TSA$. Since each $a\in A$ can be expressed as an $R$-linear combination of elements of $\nor$, we have $\widehat{a} \in \TSA$ because $\varphi$ is $R$-linear by Corollary \ref{hatmap}\ref{hatlinear}.  Corollary \ref{hatmap} implies that $\varphi$ is an injective $R$-linear map, and Remark \ref{c6}\ref{surj} implies that $\varphi$ is surjective. The same argument used in the proof of \cite[Theorem~6.6]{ACCCLMRSS} can be used to prove the isomorphism and the last statement. 
  
    To complete the proof that $\varphi$ is a graded isomorphism, we must show that $\varphi(A_\alpha) \subseteq \TSA_\alpha$. We do this by fixing $\alpha \in \Gamma$ and showing that $q(\spp(\widehat{a_\alpha})) \subseteq \cgsi(\alpha)$. Notice that $\spp(\widehat{a_{\alpha}})$ contains ultrafilters $U$ on $\nor$ such that $\csig(U)=\alpha$. So, $q(\spp(\widehat{a_{\alpha}}))$ contains ultrafilters $q(U)$ in $\G$ such that $\alpha=\csig(U) = \cgs(U)$. Thus, $q(\spp(\widehat{a_\alpha})) \subseteq \cgsi(\alpha)$, completing the proof.
\end{proof}

The following corollary follows immediately from \cref{gradediso} and \cref{iff}.

\begin{corollary}
    Suppose that $(A,C)$ is a \GAQP. Then the discrete $R$-twist  $ c_{\sig}^{-1}(\id) \to \cgsi(\id)$ of \cref{grpdultra} satisfies the local bisection hypothesis.
\end{corollary}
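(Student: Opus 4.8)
The plan is to chain together two results already established in the excerpt. The statement to prove is that for a graded algebraic quasi-Cartan pair $(A,C)$, the discrete $R$-twist $\csigi(\id) \to \cgsi(\id)$ constructed in \cref{grpdultra} satisfies the local bisection hypothesis. The key observation is that \cref{iff} gives a precise characterisation: given a graded discrete $R$-twist $\gtwists$, the associated pair $(\GTSA, \TSAU)$ is a \GAQP\ \emph{if and only if} the twist $\Sz$ over $\Gz$ satisfies the local bisection hypothesis. So the strategy is to identify our pair $(A,C)$ with a pair of twisted Steinberg algebras coming from a graded discrete $R$-twist, and then read off the conclusion from the ``only if'' direction of \cref{iff}.

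First I would invoke \cref{grpdultra} to produce the graded discrete $R$-twist $(\sig, \G, \Gamma)$ over $\G$ associated to $(A,C)$. Then I would apply \cref{gradediso}, which asserts that the map $\varphi: a \mapsto \widehat{a}$ is a graded isomorphism of $A$ onto $\GTSA$ (written $\TSA$ in the notation of that theorem, with $G=\G$ and $\Sigma = \sig$) carrying $C$ onto $A_R(\G^{(0)}; q^{-1}(\G^{(0)})) = \TSAU$. Thus, up to this isomorphism, the pair $(A,C)$ \emph{is} the pair $(\GTSA, \TSAU)$ arising from the twist $(\sig,\G,\Gamma)$. Since a graded isomorphism of pairs preserves the property of being a \GAQP, and $(A,C)$ is a \GAQP\ by hypothesis, the pair $(\GTSA, \TSAU)$ is also a \GAQP.

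Finally I would apply the ``only if'' half of the last sentence of \cref{iff}: because $(\GTSA, \TSAU)$ is a \GAQP, the twist $\Sz$ over $\Gz$ satisfies the local bisection hypothesis. In the notation of \cref{grpdultra}, the degree-$\id$ part of the twist $\Sz \to \Gz$ is exactly $\csigi(\id) \to \cgsi(\id)$ (recall the commutativity of \hyperref[comdiag]{diagram \pref{comdiag}} gives $\Sz = q^{-1}(\Gz)$, and the identification of $\Gz$ with $\cgsi(\id)$), which is precisely the twist named in the corollary. This gives the conclusion.

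The main obstacle is essentially bookkeeping rather than genuine mathematical difficulty: one must check that the notational identifications line up, namely that the twist $\Sz \to \Gz$ appearing in \cref{iff} is literally the twist $\csigi(\id) \to \cgsi(\id)$ of \cref{grpdultra}, and that the property ``being a \GAQP'' transports across the graded isomorphism $\varphi$. The latter is immediate since $\varphi$ is a graded algebra isomorphism taking $C$ onto the corresponding diagonal subalgebra, and all the defining conditions of a \GAQP\ (local units, spanning by homogeneous normalisers, existence of a faithful conditional expectation implemented by idempotents) are preserved under such an isomorphism. Care is only needed to confirm that the homogeneous components match, which is exactly the content of the final paragraph of the proof of \cref{gradediso}.
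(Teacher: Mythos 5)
Your proposal is correct and follows exactly the route the paper intends: the paper's proof is the one-line remark that the corollary ``follows immediately from \cref{gradediso} and \cref{iff}'', and your argument simply spells out that chain (transport the \GAQP{} property across the graded isomorphism of \cref{gradediso}, then apply the ``only if'' direction of \cref{iff} to the twist $(\sig,\G,\Gamma)$ from \cref{grpdultra}). The bookkeeping identifications you flag are the right ones and present no difficulty.
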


The following proposition describes the properties of the groupoid $\G$ that distinguish the graded algebraic diagonal and Cartan pairs amongst all graded algebraic quasi-Cartan pairs.  This is a consequence of \cite[Propositon~7.1]{ACCCLMRSS} and the fact that $\cgsi(\id)=\Sigma_{A_\id} / \simeq$ (see \cref{Ge}\ref{it2:Ge}), where $\Sigma_{A_\id}$ is the groupoid of ultrafilters constructed from the pair of algebras $(A_{\id},C)$.

\begin{prop}
    Suppose that $(A,C)$ is a \GAQP. Then 
    \begin{enumerate}[label=(\alph*), font=\normalfont]
        \item $(A,C)$ is a \GACP ~if and only if $\cgsi(\id)$ is effective, and
        \item $(A,C)$ is a \GADP ~if and only if $\cgsi(\id)$ is principal.
    \end{enumerate}
\end{prop}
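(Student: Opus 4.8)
The plan is to reduce the graded statement to its ungraded counterpart \cite[Proposition~7.1]{ACCCLMRSS}, transported across the identification $\cgsi(\id) = G_{A_\id}$ recorded in the proof of \cref{GHaus}, where $G_{A_\id}$ is the groupoid built from the ungraded pair $(A_\id, C)$ in the manner of \cite{ACCCLMRSS}.

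First I would unwind the definitions. Since $(A,C)$ is assumed to be a \GAQP, condition (ii) of \cref{defgradedpair} guarantees that $\nor$ spans $A$, and this spanning condition appears verbatim in the definitions of \GACP~and \GADP. Hence $(A,C)$ is a \GACP~(resp.\ a \GADP) exactly when $(A_\id, C)$ is an ACP (resp.\ an ADP). The problem therefore reduces to characterising when the ungraded pair $(A_\id, C)$ is Cartan or diagonal, which is precisely the content of \cite[Proposition~7.1]{ACCCLMRSS}: $(A_\id, C)$ is an ACP if and only if $G_{A_\id}$ is effective, and an ADP if and only if $G_{A_\id}$ is principal.

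Next I would transport these two equivalences along the groupoid identification. Recall from the proof of \cref{GHaus} that $\cgsi(\id) = G_{A_\id} = \Sigma_{A_\id}/\!\simeq$, where $\Sigma_{A_\id}$ is the groupoid of ultrafilters on $N_{A_\id}(C) = \nor \cap A_\id$. By \cref{degU}, an ultrafilter $U$ on $\nor$ satisfies $\csig(U) = \id$ if and only if every element of $U$ has degree $\id$, that is, $U$ is an ultrafilter on $\nor \cap A_\id$; this gives a bijection $\csig^{-1}(\id) \cong \Sigma_{A_\id}$. Because $\V_n \subseteq \csig^{-1}(\id)$ precisely when $n \in \nor \cap A_\id$ (and $\V_n$ meets $\csig^{-1}(\id)$ trivially otherwise), the restricted basic open sets match, so this bijection is a homeomorphism of \'{e}tale groupoids. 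Passing to the quotient by the $R^{\times}$-action $\simeq$, which preserves degree by \cref{degqU} and for which $q$ is open, then yields a homeomorphism $\cgsi(\id) \cong G_{A_\id}$ of topological groupoids. Combining this with the ungraded equivalences above establishes both (a) and (b).

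The main point requiring care is that the identification $\cgsi(\id) = G_{A_\id}$ must be a homeomorphism of topological groupoids, and not merely an algebraic isomorphism, since effectiveness is a topological condition (the interior of the isotropy must equal the unit space). That the subspace topology which $\cgsi(\id)$ inherits from $\G$ agrees with the intrinsic topology of $G_{A_\id}$ is exactly what the basis-of-bisections computation above secures, and it is essentially the content already exploited in \cref{GHaus}. Principality, being purely algebraic, transfers with no topological subtlety. Once the homeomorphism is granted, the argument is a direct application of the ungraded result and requires no further calculation.
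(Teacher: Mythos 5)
Your proposal is correct and follows essentially the same route as the paper, which derives the proposition from \cite[Proposition~7.1]{ACCCLMRSS} together with the identification $\cgsi(\id)=\Sigma_{A_\id}/\simeq$ recorded in \cref{Ge}. Your additional care in verifying that this identification is a homeomorphism of topological groupoids (so that effectiveness transfers) is a worthwhile elaboration of a point the paper leaves implicit, but it does not constitute a different argument.
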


\section{Recovering a graded discrete twist} \label{sectionrecover}

In this section, we show that if we start with  a graded discrete $R$-twist $\gtwists$ over an ample Hausdorff groupoid $G$, then there is a natural graded embedding of $\Sigma$ into the groupoid of ultrafilters $\sig$ obtained from Theorem \ref{grpdultra} applied to the pair $(A,C):=(\GTSA, \TSAU)$. We then show that this map is an isomorphism if and only if the twist $\Sz$ over $\Gz$ satisfies the local bisection hypothesis. This construction uses similar arguments used in \cite[Section~8]{ACCCLMRSS}. However, the surjectivity argument in \cite[Theorem~8.7]{ACCCLMRSS} requires the entire twist $\Sigma$ to satisfy the local bisection hypothesis, which is no longer the case in our setting. We show in Lemma \ref{supportedbis}, that we only need this condition to be satisfied on the twist $\Sz$.  Throughout, we will identify $\sig^{(0)}$ with ultrafilters on $I(C)$ via a homeomorphism described in \cref{ultprop}\ref{c5.1}. 

With $(A,C)$ as in the previous paragraph, let $U$ be an ultrafilter on the idempotents $I(C)$ of $C$ and $n\in \nor$ satisfy $n^{\dagger}n \in U$. The argument used in the proof of \cite[Lemma~8.1]{ACCCLMRSS} can be used to prove that the upclosure 
\[(nU)\up \in \mathcal{V}_n \subseteq \sig.
\]

For $\sigma\in \Sigma$, the set $U_{s(\sigma)}:= \{e\in I(C): e(s(\sigma)=1) \}$ is an ultrafilter on $I(C)$ by \cite[Lemma~8.3]{ACCCLMRSS}. In our main proposition, \cref{c8.4}, we are interested in the upclosure $(\til_XU_{s(\sigma)})\up$. For this to be in $\sig$, we need the normaliser $\til_X$ to be homogeneous, so we restrict to homogeneous compact open bisection $X \subseteq \Sigma$ (in the sense that $X \subseteq \Sigma_\gamma$ for some $\gamma \in \Gamma$, refer to \cref{spannorm}. The proofs for the following proposition and corollary are similar to \cite[Proposition~8.4~and~Corollary~8.5]{ACCCLMRSS} using the appropriate notions of an upclosure and ultrafilter on the inverse semigroup $\nor$ rather than $N(C)$.  We show that the additional graded structure is also preserved.

\begin{prop}\label{c8.4}
    Let $\gtwist$ be a graded discrete $R$-twist over an ample Hausdorff groupoid $G$, let $A:= \GTSA$ and let $C:= \TSAU$. Let $\sig$ be the groupoid of ultrafilters on $\nor$ defined in Section \ref{star} and let $G_{\star}$ be the corresponding quotient of $\sig$ by the action of $R^{\times}$. For each $x \in \Sigma^{(0)}$, let $U_x$ be the ultrafilter $\{e\in I(C): e(x)=1\}$ on $I(C)$ corresponding to $x$. For $\sigma \in \Sigma$, let $\gamma= c_{\Sigma}(\sigma)$
    , let $X \subseteq \Sigma_{\gamma}$ be a compact open bisection containing $\sigma$, and let 
    $$\Phi(\sigma):= (\til_X U_{s(\sigma)})\up \in \sig.$$
    Then $\Phi$ does not depend on the choice of $X$. The map $\Phi: \Sigma \to \sig$ is an $R^\times$-equivariant continuous open graded  embedding of topological groupoids such that $\Phi(\Sigma^{(0)})= \sig^{(0)}$.
\end{prop}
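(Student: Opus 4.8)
The plan is to follow \cite[Proposition~8.4 and Corollary~8.5]{ACCCLMRSS} in spirit, replacing the inverse semigroup $N(C)$ by the inverse semigroup $\nor$ of homogeneous normalisers (and the associated upclosures and ultrafilters accordingly), and to insert at each stage the check that $\Phi$ intertwines the gradings. First I would verify that $\Phi$ is well defined and lands in $\sig$. Given $\sigma$ with $\gamma = c_\Sigma(\sigma)$ and a compact open bisection $X \subseteq \Sigma_\gamma$ containing $\sigma$, \cref{spannorm}\ref{homonorm} gives $\til_X \in \nor$, and $\til_X^\dagger\til_X$ is the idempotent supported on $s(X)$; since $s(\sigma) \in s(X)$ this idempotent lies in $U_{s(\sigma)}$, so by the remark preceding the statement $(\til_X U_{s(\sigma)})\up \in \V_{\til_X} \subseteq \sig$. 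Independence of $X$ follows as in the ungraded case: if $X,X' \subseteq \Sigma_\gamma$ both contain $\sigma$, refine to a compact open bisection $X'' \subseteq X \cap X'$ containing $\sigma$, observe $\til_{X''} \le \til_X$ and $\til_{X''} \le \til_{X'}$ with witnessing idempotents in $U_{s(\sigma)}$, and conclude that the three upclosures coincide.

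Next I would record that $\Phi$ is graded, which is the genuinely new ingredient. Since $X \subseteq \Sigma_\gamma$, \cref{spannorm}\ref{homonorm} gives $\deg(\til_X) = \gamma$, so by the definition of $\csig$ (well defined by \cref{degU}) we have $\csig(\Phi(\sigma)) = \deg(\til_X) = \gamma = c_\Sigma(\sigma)$. Hence $\csig \circ \Phi = c_\Sigma$, i.e.\ $\Phi$ carries $\Sigma_\gamma$ into $\csigi(\gamma)$, which is precisely the graded compatibility.

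The heart of the argument is that $\Phi$ is a groupoid homomorphism, and I expect the multiplicativity to be the main obstacle. For composable $(\sigma,\tau)$, choose homogeneous compact open bisections $X \ni \sigma$ and $Y \ni \tau$; then $XY$ is a compact open bisection contained in $\Sigma_{c_\Sigma(\sigma)c_\Sigma(\tau)} = \Sigma_{c_\Sigma(\sigma\tau)}$ and containing $\sigma\tau$, and $\til_{XY} = \til_X * \til_Y$. Using $s(\sigma\tau) = s(\tau)$ together with the composition rule for upclosures in $\sig$, I would show $\Phi(\sigma)\Phi(\tau) = (\til_X U_{s(\sigma)})\up(\til_Y U_{s(\tau)})\up = (\til_{XY} U_{s(\tau)})\up = \Phi(\sigma\tau)$, exactly as in the ungraded proof but now with every bisection homogeneous so that every $\til$-function is a homogeneous normaliser. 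Compatibility with inversion follows from $\til_{X^{-1}} = \til_X^\dagger$ and $X^{-1} \subseteq \Sigma_{\gamma^{-1}}$. The difficulty here is purely the upclosure/ultrafilter bookkeeping on $\nor$, which must be reconciled with $\til_{XY} = \til_X * \til_Y$ while maintaining homogeneity of all bisections under composition and inversion.

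Finally I would assemble the remaining properties. For $R^\times$-equivariance I would use $\til_{t\cdot X} = t\,\til_X$ and $s(t\cdot\sigma) = s(\sigma)$, together with the fact (\cref{ultprop}\ref{ultprop1}) that $V \mapsto tV$ is an order isomorphism, to get $\Phi(t\cdot\sigma) = (t\,\til_X U_{s(\sigma)})\up = t\,\Phi(\sigma)$; here $t\cdot X \subseteq \Sigma_\gamma$ because $i(G^{(0)}\times R^\times) \subseteq \Sigma_\id$. For the unit space, taking $X \subseteq \Sigma^{(0)}$ makes $\til_X$ an idempotent of $I(C)$ lying in $U_x$, so $\Phi(x) = (\til_X U_x)\up = (U_x)\up \in \sig^{(0)}$ under the identification of \cref{ultprop}\ref{c5.1}; since every element of $\sig^{(0)}$ is some $(U_x)\up$, we get $\Phi(\Sigma^{(0)}) = \sig^{(0)}$. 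Openness follows from $\Phi(X) = \V_{\til_X}$ for each homogeneous compact open bisection $X$ (these $X$ form a basis of $\Sigma$, and the $\V_{\til_X}$ are open in $\sig$), and continuity from identifying $\Phi^{-1}$ of each basic open set $\V_n$. Injectivity holds because $\Phi(\sigma)$ recovers $s(\sigma)$ via $s(\Phi(\sigma)) = U_{s(\sigma)}$ and then the unique point of the bisection $X$ with that source. An injective, continuous, open groupoid homomorphism is an embedding of topological groupoids, which completes the proof.
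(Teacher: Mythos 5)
Your proposal is correct and follows essentially the same route as the paper: the paper's proof consists solely of the grading check $\cigma = \csig\circ\Phi$ (which you carry out identically via $\deg(\til_X)=\gamma$), deferring all the topological-groupoid verifications to \cite[Proposition~8.4 and Corollary~8.5]{ACCCLMRSS} with $N(C)$ replaced by $\nor$. You simply spell out those deferred verifications explicitly, and they match the cited ungraded arguments.
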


\begin{proof}
   It suffices to show that $\cigma = \csig \circ \Phi$. Fix $\sigma \in \Sigma$. Let $\gamma=\cigma(\sigma)$, let $X\subseteq\Sigma_\gamma$ be a compact open bisection containing $\sigma$, and let $e \in U_{s(\sigma)}$. By \cref{spannorm}\ref{homonorm}, $\til_X \in A_\gamma$ and by \cref{I(C)}\ref{idem}, $e \in A_\id$. Hence, $\til_{X}e\in A_\gamma$.
    Since $\til_Xe \in (\til_X U_{s(\sigma)})\up  \in \sig$, observe that 
    \begin{equation*}
        \csig(\Phi(\sigma)) = \csig((\til_X U_{s(\sigma)})\up) = \gamma = \cigma(\sigma), 
    \end{equation*}
    as required. 
\end{proof}

\begin{corollary} \label{reconstruction}
    Let $\gtwist$ be a graded discrete $R$-twist over an ample Hausdorff groupoid $G$, let $A:=\GTSA$ and $C:=\TSAU$. Let $$\xymatrix@1{
		G_\star^{(0)} \times R^{\times} \ar[r]^-{i'} ~ &~ \sig \ar[r]^{q'}  & G_\star 
	}$$
    be the sequence obtained from $(A,C)$ via Theorem \ref{grpdultra}. Let $\Phi: \Sigma \to \sig$ be the map of Proposition \ref{c8.4}. Then there is a (well-defined) graded homomorphism $\Phi_G: G \to G_\star$ given by $\Phi_G(q(\sigma))= q'(\Phi(\sigma))$, and the following diagram commutes. 

    \begin{equation*}
        \xymatrixcolsep{3pc}\xymatrix@1{
		G^{(0)} \times R^{\times} \ar[r]^-{i}  \ar[dd]^-{\Phi_G \times \mathrm{id}}~ &~ \Sigma \ar[rr]^{q} \ar[dd]^{\Phi} \ar[rd]^{\cigma}  &~& G \ar[ld]_{\cg} \ar[dd]^-{\Phi_{G}~~} \\
        &&\Gamma \\
		G_\star^{(0)} \times R^{\times}  \ar[r]_-{i'} & \sig \ar[ru]_{\csig}  \ar[rr]_-{q'} & ~&G_\star \ar[lu]_{\cgs}}
    \end{equation*}
\end{corollary}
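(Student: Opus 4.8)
The plan is to obtain $\Phi_G$ as the map induced by $\Phi$ on the $R^\times$-quotients and then to read off every face of the diagram from the properties of $\Phi$ already established in \cref{c8.4}, together with the grading identities from \cref{cont} and \cref{comdiag}. First I would verify that $\Phi_G$ is well defined. Two elements $\sigma,\sigma'\in\Sigma$ have $q(\sigma)=q(\sigma')$ precisely when they lie in a common $R^\times$-orbit, i.e.\ $\sigma'=t\cdot\sigma$ for some $t\in R^\times$, since the fibres of $q$ are exactly the $R^\times$-orbits. As \cref{c8.4} gives that $\Phi$ is $R^\times$-equivariant, $\Phi(\sigma')=t\,\Phi(\sigma)$, and since $q'$ is the quotient of $\sig$ by precisely the $R^\times$-action of \cref{quotientaction}, we get $q'(\Phi(\sigma'))=q'(\Phi(\sigma))$. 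Hence $\Phi_G(q(\sigma)):=q'(\Phi(\sigma))$ is independent of the representative, and the back square $q'\circ\Phi=\Phi_G\circ q$ commutes by construction.

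Next I would check that $\Phi_G$ is a graded groupoid homomorphism. Source and range descend to the $R^\times$-orbits (the action fixes $r$ and $s$), so if $q(\sigma)$ and $q(\tau)$ are composable in $G$ then any representatives already satisfy $s(\sigma)=r(\tau)$, because $q$ restricts to a bijection of unit spaces; thus $(\sigma,\tau)\in\Sigma^{(2)}$. The homomorphism property then follows from
\[
\Phi_G(q(\sigma)q(\tau)) = q'(\Phi(\sigma\tau)) = q'(\Phi(\sigma)\Phi(\tau)) = \Phi_G(q(\sigma))\Phi_G(q(\tau)),
\]
using that $\Phi$ and $q'$ are homomorphisms. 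For the grading, the computation
\[
\cgs(\Phi_G(q(\sigma))) = \cgs(q'(\Phi(\sigma))) = \csig(\Phi(\sigma)) = \cigma(\sigma) = \cg(q(\sigma))
\]
combines \cref{cont} ($\csig=\cgs\circ q'$), the identity $\cigma=\csig\circ\Phi$ from \cref{c8.4}, and the original grading \cref{comdiag} ($\cigma=\cg\circ q$); this simultaneously shows $\Phi_G$ is graded and that the two $\Gamma$-triangles on the right of the diagram close up.

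Finally, for the left square I would fix $(x,t)\in G^{(0)}\times R^\times$ and compute both paths. Viewing $x$ as a unit of $\Sigma^{(0)}$, one has $i(x,t)=t\cdot x$, so $R^\times$-equivariance gives $\Phi(i(x,t))=t\,\Phi(x)$; since $\Phi(x)\in\sig^{(0)}$ by \cref{c8.4}, the definition of $i'$ from \cref{grpdultra} yields $i'(\Phi_G(q(x)),t)=i'(q'(\Phi(x)),t)=t\,\Phi(x)$ as well, whence $\Phi\circ i=i'\circ(\Phi_G\times\mathrm{id})$. Together with the outer triangles $\cg\circ q=\cigma$ (\cref{comdiag}) and $\cgs\circ q'=\csig$ (\cref{cont}), this completes commutativity of the whole diagram. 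The only genuine subtleties—and the steps I would treat most carefully—are the well-definedness of $\Phi_G$ and the identification in the left square, both of which hinge on the $R^\times$-equivariance of $\Phi$ and on $q,q'$ being the quotient maps by the \emph{same} $R^\times$-action; everything else is a formal diagram chase assembling results proved earlier.
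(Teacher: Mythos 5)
Your proposal is correct and, on the genuinely new content, identical to the paper's proof: the grading identity $\cgs\circ\Phi_G=\cg$ is established by exactly the same chain $\cgs(q'(\Phi(\sigma)))=\csig(\Phi(\sigma))=\cigma(\sigma)=\cg(q(\sigma))$ using \cref{cont}, \cref{c8.4} and the commutativity of \cref{comdiag}. The remaining verifications you carry out (well-definedness of $\Phi_G$ via $R^\times$-equivariance, the homomorphism property, and the left square) are the parts the paper delegates to the ungraded result \cite[Corollary~8.5]{ACCCLMRSS}, and your direct arguments for them are sound.
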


\begin{proof}
   We check that $\Phi_G$ preserves the grading, that is, $\cg = \cgs \circ \Phi_G$.
    Fix $\lambda \in G$. Since $q$ is surjective, there exists $\sigma \in \Sigma$ such that $q(\sigma)=\lambda$. Since $\gtwist$ is a graded discrete $R$-twist,  $\cigma(\sigma)= \cg(q(\sigma))= \cg(\lambda)$. Using this for the last equality and using \cref{cont} and \cref{c8.4} for the third and fourth equalities, respectively, we have
    \begin{equation*}
        \cgs(\Phi_G(\lambda)) = \cgs(\Phi_G(q(\sigma)) = \cgs(q'(\Phi(\sigma)) = \csig(\Phi(\sigma)) = \cigma(\sigma) = \cg(\lambda),
    \end{equation*}
    as needed.
\end{proof}

\begin{corollary}\label{suppbis}
    Fix $n\in \nor$. Then $\spg(n)$ is a bisection if and only if $n=\til_X$ for a (necessarily unique) homogeneous compact open bisection $X$ of $\Sigma$. 
\end{corollary}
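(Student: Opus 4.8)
The plan is to prove the two implications separately, the forward direction being the substantial one. For the reverse implication, suppose $n=\til_X$ for a homogeneous compact open bisection $X\subseteq\Sigma$. Then $\spg(n)=q(X)$ by the description of $\til_B$ in the preliminaries. Since $X$ is a bisection of $\Sigma$ it meets each fibre $q^{-1}(\alpha)$ in at most one point (two points in a common fibre would share a range, contradicting injectivity of $r|_X$), so $q|_X$ is injective. Combining this with the fact that $q$ is an open local homeomorphism that intertwines $r$ and $s$ and is injective on unit spaces, one checks directly that $q(X)$ is a compact open bisection of $G$.

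For the forward implication, assume $n\in\nor$ and $B:=\spg(n)$ is a bisection of $G$. Since $n$ is homogeneous, say $n\in\TS_\gamma$, and $\spp(n)=q^{-1}(B)\subseteq q^{-1}(G_\gamma)=\Sigma_\gamma$ by the commuting triangle in the definition of a graded discrete twist, everything in sight is $\gamma$-graded. The whole direction reduces to the claim that $n(\sigma)\in R^\times$ for every $\sigma\in\spp(n)$. Granting this, I would set $X:=n^{-1}(\{1\})$. For $\alpha\in B$ the fibre $q^{-1}(\alpha)$ is an $R^\times$-torsor on which $n$ is $R^\times$-contravariant and nowhere zero, so by the claim $n$ takes the value $1$ at exactly one point of it; hence $q|_X\colon X\to B$ is a bijection. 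Then $X$ is clopen (as $n$ is locally constant), is a bisection (one point per fibre, with injectivity of $r$ and $s$ transferred from $B$ through $q$), and is homogeneous ($X\subseteq\Sigma_\gamma$). Compactness I would obtain from local triviality: cover $B$ by finitely many compact open bisections $B_\alpha$ carrying continuous local sections $P_\alpha$, and observe that $X\cap q^{-1}(B_\alpha)$ is the image of the continuous map $\beta\mapsto n(P_\alpha(\beta))\cdot P_\alpha(\beta)$ on the compact set $B_\alpha$. Finally $n=\til_X$ follows by evaluating both sides fibrewise, and uniqueness is immediate since necessarily $X=n^{-1}(\{1\})$.

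It remains to prove the claim, which is the heart of the matter. Let $m:=n^\dagger$ be the inverse of $n$ in the inverse semigroup $\nor$, and set $e:=n^\dagger n$, which is an idempotent lying in $I(C)$ by \cref{invsem}. Since $e\in\TSAU$ and $R$ is indecomposable, a short convolution computation shows $e(u_x)\in\{0,1\}$ for each unit $u_x$ over $x\in G^{(0)}$. Fix $\sigma\in\spp(n)$ with $q(\sigma)=\beta\in B$ and put $x:=s(\beta)$. From $n*e=n$, the fact that $B$ is a bisection, and $n(\sigma)\neq 0$, evaluating on the fibre over $\beta$ forces $e(u_x)=1$ (here one uses that $e(u_x)$ is an idempotent of $R$). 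On the other hand, because $B$ is a bisection, the convolution $(n^\dagger n)(u_x)=(m*n)(u_x)$ collapses to the single term $m(\tau^{-1})\,n(\tau)$ for any $\tau\in q^{-1}(\beta)$. Equating with $e(u_x)=1$ gives $m(\tau^{-1})\,n(\tau)=1$, so $n(\tau)\in R^\times$; as $\tau$ ranges over the fibre and $\beta$ over $B$, the claim follows.

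The main obstacle is exactly this claim: upgrading the values of $n$ on its support from merely nonzero to invertible. This is where the inverse-semigroup structure on $\nor$ is essential, supplying the inverse $n^\dagger$ and, via \cref{invsem}, the identification of idempotents with $I(C)$, together with the indecomposability of $R$. Extra care is needed because $R$ may have zero divisors, so one cannot simply cancel; instead one must exploit throughout that $e(u_x)$ is a genuine idempotent of $R$, and hence equal to $0$ or $1$.
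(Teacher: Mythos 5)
Your proof is correct, but it takes a genuinely different route from the paper's. The paper's proof of this corollary is a two-line reduction: it invokes the ungraded statement \cite[Proposition~8.6]{ACCCLMRSS}, which already gives the equivalence of ``$\spg(n)$ is a bisection'' with ``$n=\til_X$ for a unique compact open bisection $X$'', and then adds only the observation that homogeneity of $X$ is automatic, since $n\in\TS_\gamma$ gives $q(X)=\spg(\til_X)=\spg(n)\subseteq G_\gamma$ and hence $X\subseteq \Sigma_\gamma$. You instead reprove the ungraded content from scratch. Your central step --- upgrading the values of $n$ on its support from nonzero to invertible by using $e=n^\dagger n\in I(C)$, the fact that $e(u_x)$ is an idempotent of the indecomposable ring $R$ and hence equals $0$ or $1$, and the collapse of the convolutions $n*e$ and $n^\dagger*n$ to single terms because $\spg(n)$ is a bisection --- is sound, and is essentially the engine inside the cited ungraded proposition. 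What your version buys is self-containedness and the explicit description $X=n^{-1}(\{1\})$, which makes uniqueness immediate; what it costs is length, and relative to this paper the real content of the corollary (the homogeneity of $X$, which you get from $X\subseteq\spp(n)\subseteq\Sigma_\gamma$) is the same one-line check. One small point to tidy: in your compactness argument the map $\beta\mapsto n(P_\alpha(\beta))\cdot P_\alpha(\beta)$ is only defined where $n(P_\alpha(\beta))$ is invertible, so you should refine the cover so that each $B_\alpha\subseteq\spg(n)$ (or parametrise over $B_\alpha\cap\spg(n)$, which is still compact open); as written, $\beta\in B_\alpha\setminus\spg(n)$ would give the undefined expression $0\cdot P_\alpha(\beta)$.
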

\begin{proof}
    This mostly follows from \cite[Proposition~8.6]{ACCCLMRSS}. We are left to show that such $X$ is contained in $\Sigma_\gamma$  for some $\gamma \in \Gamma$ such that $n\in \TS_\gamma$. Since $\til_X=n\in \TS_\gamma$, we have $q(X) = \spg(\til_X) \subseteq G_\gamma$. Thus, $X \subseteq \Sigma_\gamma$, as required.  
\end{proof}

It follows from Corollary \ref{suppbis} that $X \mapsto \til_X$ is an isomorphism of the inverse semigroup of homogeneous compact open bisections of $\Sigma_\gamma$ onto $\nor$ if the twist $\Sigma_{\id} \to \Gz$ satisfies the local bisection hypothesis (using \cite[Corollary~2.10]{ACCCLMRSS}). 
The following lemma shows that we only need the twist $\Sz$ to satisfy the local bisection hypothesis for $\spg(n)$ to be a bisection of $G$ for any homogeneous normaliser $n$.  

\begin{lem}\label{supportedbis}
Suppose that the twist $(\Sz, i',q')$ over $\Gz$ satisfies the local bisection hypothesis, and fix $n \in \nor$. Then $\spg(n)$ is a bisection of $G$.
\end{lem}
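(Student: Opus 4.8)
The plan is to reduce the claim for an arbitrary homogeneous normaliser $n \in \nor$ to the local bisection hypothesis for the $\id$-component twist $\Sz \to \Gz$, the idea being to multiply $n$ by a single honest bisection so that the product lands in $A_\id$ while still witnessing any failure of the bisection property of $\spg(n)$. Since $n \in \TS_\gamma$ for some $\gamma$, we have $\spg(n) \subseteq G_\gamma$, and I must show that both $r|_{\spg(n)}$ and $s|_{\spg(n)}$ are injective.

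For range-injectivity I would argue by contradiction: suppose $\alpha, \beta \in \spg(n)$ with $\alpha \neq \beta$ and $r(\alpha) = r(\beta)$. Using ampleness of $G$ and local triviality of the twist, pick a compact open bisection $B \subseteq G_\gamma$ with $\alpha \in B$ and lift it to a homogeneous compact open bisection $X \subseteq \Sigma_\gamma$ with $q(X) = B$. Put $m := \til_{X^{-1}} * n$. By closure of $\nor$ under multiplication (\cref{invsem}) and \cref{I(C)}\ref{norminv}, we have $m \in \nor \cap A_\id = N_{A_\id}(C)$, so $m$ is a normaliser of $C$ in $A_\id$. The heart of the matter is a support computation: choosing the section in the convolution formula so that $S(\alpha^{-1}) \in X^{-1}$, the defining sum for $(\til_{X^{-1}} * n)(\sigma)$ collapses to the single index $\alpha^{-1}$ whenever $r(\sigma) = s(\alpha)$, because $X^{-1}$ is a bisection and $\alpha^{-1}$ is its unique element of range $s(\alpha)$. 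Evaluating at lifts of $s(\alpha)$ and of $\alpha^{-1}\beta$ -- both of which lie in $\Gz$ and have range $s(\alpha)$ -- returns $n$ at a lift of $\alpha$, respectively $\beta$, which are nonzero. Hence $s(\alpha), \alpha^{-1}\beta \in \spg(m)$; these are distinct (one is a unit, the other is not, as $\alpha \neq \beta$) yet share the range $s(\alpha)$, so $\spg(m)$ is not a bisection of $\Gz$, contradicting the local bisection hypothesis. Thus $r|_{\spg(n)}$ is injective. Source-injectivity is entirely symmetric: assuming $\alpha \neq \beta \in \spg(n)$ with $s(\alpha) = s(\beta)$, I take $B \ni \beta$ and set $m := n * \til_{X^{-1}}$ instead; the same collapse of the convolution (again using that $B$ is a bisection, so has a unique element of any prescribed range) places $r(\beta)$ and $\alpha\beta^{-1}$ in $\spg(m)$ with common source $r(\beta)$, once more contradicting the local bisection hypothesis. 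Together these show $\spg(n)$ is a bisection.

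The main obstacle -- and the reason neither the argument of \cite{ACCCLMRSS} nor a naive idempotent trick applies -- is choosing the right auxiliary element of $A_\id$. The obvious candidate $n^\dagger * n$ is useless: it is an idempotent of $\nor$, hence lies in $C$ and is automatically supported in $G^{(0)}$, because the off-diagonal contributions cancel in the convolution regardless of whether $\spg(n)$ is a bisection. The device that makes the argument work is to multiply by $\til_{X^{-1}}$, which covers only a neighbourhood of a single one of the two offending points; this isolates the non-injectivity inside $A_\id$, and the bisection property of $B = q(X)$ is exactly what guarantees that the surviving term of the convolution sum is not washed out by cancellation. Verifying this no-cancellation step is the technical crux of the proof.
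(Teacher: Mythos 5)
Your proof is correct and follows essentially the same route as the paper's: both reduce to the local bisection hypothesis on $\Sz \to \Gz$ by multiplying $n$ by $\til_{X^{-1}}$ for a single homogeneous compact open bisection $X$ (so the product lies in $\nor \cap A_\id$) and then observing that the unit $s(\alpha)$ and the non-unit $\alpha^{-1}\beta$ both survive into $\spg(m)$ with a common range. The only difference is cosmetic -- the paper first decomposes $n = \sum r_{B_i}\til_{B_i}$ over bisections with disjoint images and takes $X = B_0$, whereas you take an arbitrary bisection neighbourhood of $\alpha$ and make the single-term collapse of the convolution (the no-cancellation step) explicit, which the paper leaves implicit.
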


\begin{proof}
    We show $r|_{\spg(n)}$ is injective.  A similar argument gives $s|_{\spg(n)}$ injective.   
    Since $n$ is homogeneous, there exists $\gamma \in \Gamma$ such that $n\in A_\gamma$. Write $n=\sum_{B_i \in \mathcal{F}} r_{B_i}\til_{B_i}$ where $\mathcal{F}$ is a finite set of compact open bisections in $\Sigma_\gamma$ with mutually disjoint images in $G_\gamma$ and $0\neq r_{B_i} \in R$. 
    Since
    \[\spg(n) = \bigcup_{\mathcal{F}}q(B_i)\]
    and $r(q(B_i))=r(B_i)$ for all $i$, to show $r$ is injective on this union, it is enough to
    show $r(B_i) \cap r(B_j) = \emptyset$ for all $i \neq j$.
    We have $r(B_0) \cap r(B_1)=\emptyset$; a similar argument holds for any $i\neq j$.  
    Let 
    \[m= \til_{B_0^{-1}} * n = r_{B_0}\til_{s(B_0)}+ \sum_{i\neq 0}r_{B_i}\til_{B_0^{-1}B_i}.
    \]
    Then $m \in \nor \cap A_\id$ so $\spg(m)$ is a bisection in $\Gz$ by assumption. By way of contradiction, suppose there exist $\alpha\in B_0$ and $\beta \in B_1$ such that $r(\alpha)=r(\beta)$.  Since the $B_i's$ were chosen with mutually disjoint images in $G_{\gamma}$, we have that $\alpha \neq \beta$ and that both $\alpha^{-1}\alpha$ and  $\alpha^{-1}\beta$ are in $\spg(m)$. But then
    \[
r(\alpha^{-1}\alpha)=r(\alpha^{-1})=r(\alpha^{-1}\beta),\]
and since $r|_{\spg(m)}$ is injective, we have $\alpha = \beta$, which is a contradiction. 
\end{proof}

\begin{thm}\label{unique}
    Let $\gtwist$ be a graded discrete $R$-twist over an ample Hausdorff groupoid $G$, let $A:= \GTSA$ and let $C:= \TSAU$. Let $\sig$ be the groupoid of ultrafilters on $\nor$ defined in Section \ref{star}. Let $\Phi: \Sigma \to \sig$ be the map from \cref{c8.4}. The following are equivalent.
    \begin{enumerate}[label=(\alph*), font=\normalfont]
        \item \label{8.7.1} The pair $(A,C)$ is a graded algebraic quasi-Cartan pair. 
        \item \label{8.7.2}  The twist $\Sz$ over $\Gz$ satisfies the local bisection hypothesis.
        \item \label{8.7.3}  The map $\Phi$ is surjective, and in particular, it is a graded isomorphism of topological groupoids. 
    \end{enumerate}
\end{thm}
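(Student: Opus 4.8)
The plan is to establish the cycle \ref{8.7.1} $\Leftrightarrow$ \ref{8.7.2} $\Rightarrow$ \ref{8.7.3} $\Rightarrow$ \ref{8.7.2}. The equivalence \ref{8.7.1} $\Leftrightarrow$ \ref{8.7.2} is immediate from \cref{iff}, which already records that $(A,C)$ is a \GAQP\ precisely when $\Sz$ over $\Gz$ satisfies the local bisection hypothesis; so the real work is the pair of implications linking \ref{8.7.2} and \ref{8.7.3}. Throughout I will use that $\Phi$ is, by \cref{c8.4}, an injective, continuous, open, grading-preserving groupoid homomorphism with $\Phi(\Sigma^{(0)})=\sig^{(0)}$, so that surjectivity is the only thing separating $\Phi$ from a graded isomorphism of topological groupoids.

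For \ref{8.7.2} $\Rightarrow$ \ref{8.7.3}, I would first feed the hypothesis into \cref{supportedbis} to conclude that $\spg(n)$ is a bisection of $G$ for \emph{every} $n\in\nor$ --- not merely for $n\in\nor\cap A_\id$ --- and then apply \cref{suppbis} to write each such $n$ as $\til_X$ for a unique homogeneous compact open bisection $X\subseteq\Sigma$. This is the step that replaces the ungraded argument of \cite[Theorem~8.7]{ACCCLMRSS}, where one must assume the whole twist satisfies the hypothesis. To prove surjectivity, fix $U\in\sig$, pick $n=\til_X\in U$, and let $U_x:=s(U)\in\sig^{(0)}$ correspond (via \cref{c8.4} and \cref{ultprop}\ref{c5.1}) to a point $x\in\Sigma^{(0)}$. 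Since $\til_X^\dagger\til_X=\til_{s(X)}\in s(U)$, we get $x\in s(X)$, so there is a unique $\sigma\in X$ with $s(\sigma)=x$; I would then check $\Phi(\sigma)=U$ by noting that both $\Phi(\sigma)$ and $U$ lie in the open bisection $\V_{\til_X}$ and share the source $U_x$, whence they coincide because the source map is injective on $\V_{\til_X}$.

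For \ref{8.7.3} $\Rightarrow$ \ref{8.7.2}, I would restrict $\Phi$ to the degree-$\id$ part. Because $\cigma=\csig\circ\Phi$ and $\Phi$ is surjective, $\Phi$ maps $\Sigma_\id=\cigma^{-1}(\id)$ onto $\csig^{-1}(\id)$, which the proof of \cref{GHaus} identifies with the ungraded ultrafilter groupoid $\Sigma_{A_\id}$ of the pair $(A_\id,C)$; moreover $\Phi|_{\Sigma_\id}$ is exactly the ungraded map of \cite[Proposition~8.4]{ACCCLMRSS} for that pair, since for $\sigma\in\Sigma_\id$ one may take $X\subseteq\Sigma_\id$ and $\til_X\in\nor\cap A_\id=N_{A_\id}(C)$. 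By \cref{sathypo} the pair $(A_\id,C)$ satisfies the hypotheses required to invoke the ungraded recovery theorem \cite[Theorem~8.7]{ACCCLMRSS}, and surjectivity of $\Phi|_{\Sigma_\id}$ there forces $\Sz$ to satisfy the local bisection hypothesis, which is \ref{8.7.2}.

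The main obstacle is the surjectivity step inside \ref{8.7.2} $\Rightarrow$ \ref{8.7.3}: the delicate point --- and the reason \cref{supportedbis} is needed --- is that we are only given the local bisection hypothesis on the identity fibre $\Sz$, yet to realize an arbitrary ultrafilter $U\in\sig$ as $\Phi(\sigma)$ we must know that every homogeneous normaliser, of \emph{arbitrary} degree, has the form $\til_X$. Once \cref{supportedbis} upgrades the hypothesis to all degrees, the ultrafilter-matching argument is routine, and the remaining verifications --- that a bijective continuous open grading-preserving groupoid homomorphism is a graded isomorphism of topological groupoids --- are formal.
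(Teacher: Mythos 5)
Your proposal is correct and follows essentially the same route as the paper: \cref{iff} for \ref{8.7.1}$\iff$\ref{8.7.2}, the combination of \cref{supportedbis} and \cref{suppbis} to realise every homogeneous normaliser in an ultrafilter as some $\til_X$ and then match $U$ with $\Phi(\sigma)$ for the unique $\sigma\in X$ over $x$, and restriction of $\Phi$ to the identity fibre followed by the ungraded \cite[Theorem~8.7]{ACCCLMRSS} for \ref{8.7.3}$\implies$\ref{8.7.2}. The only (harmless) variation is the final step of the surjectivity argument, where the paper concludes from $\Phi(\sigma)=(\til_XU_{s(\sigma)})^{\uparrow}\subseteq U$ and maximality of ultrafilters, while you conclude from injectivity of $s$ on the bisection $\V_{\til_X}$; both are valid.
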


\begin{proof}
    Corollary \ref{iff} gives \ref{8.7.1} $\iff$ \ref{8.7.2}. To see that \ref{8.7.2} $\implies$ \ref{8.7.3}, with \cref{supportedbis}, proof is similar to \cite[Theorem~8.7:(2)$\implies$(3)]{ACCCLMRSS}.  We provide the details. Fix $U \in \sig$ and $n \in U$.  Then $\spg(n)$ is a bisection by \cref{supportedbis} and hence there exists a compact open bisection $X \subseteq \sig$ such that $n=\til_X$ by \cref{suppbis}.  Since $\Phi$ is a bijection between the unit spaces by \cref{c8.4}, there exists $x \in \sig^{(0)}$ such that $s(U)=(U_x)^{\uparrow}$.  By construction, $x \in s(X)$.  Let $\sigma$ be the unique element of $X$ with $s(\sigma)=x$.  Then
    \[
    \Phi(\sigma)=(\til_XU_{s(\sigma)})^{\uparrow}\subseteq (Us(U))^{\uparrow}=U
    \]
    and hence $\Phi(\sigma)=U$ because $U$ is an ultrafilter.  

    Finally, for \ref{8.7.3} $\implies$ \ref{8.7.2}, if $\Phi:\Sigma\to \sig$ is a graded isomorphism, it restricts to a groupoid isomorphism $\Phi:\Sigma_{\id} \to {\sig}_{\id}$.  Now we can apply  \cite[Theorem~8.7:(3)$\implies$(2)]{ACCCLMRSS} to get the result. 
\end{proof}

\begin{rmk}
    Our main result uses the homogeneous normalisers to establish an isomorphism to a twisted Steinberg algebra (\cref{gradediso}) and reconstruction result (\cref{unique}).  Thus, in the graded setting, a proper subset of normalisers is sufficient to get the kind of results in \cite{ACCCLMRSS} where *all* normalisers are used.  This is reminiscent of the recent work of the first-named author and coauthors in \cite{BCLM} for twisted groupoid C*-algebras. Further work to ``algebra-fy'' \cite{BCLM} is underway. 
\end{rmk}

\section{Examples}
\label{sectionexample}

In Section 9 of \cite{ACCCLMRSS}, it is shown that in a discrete $R$-twist $\twist$ over a discrete group $G$ with identity $\id$, the local bisection hypothesis is equivalent to no nontrivial units condition for twisted group rings.  There are known examples of groups where this fails, even when $R$ is a field,  see for example \cite{Gardam}. In that case, by \cite[Lemma~4.2]{ACCCLMRSS}, the pair $(A,C):=(A_R(G,\Sigma), A_R(\{\id\}, q^{-1}(\{\id \}))$ is not an AQP. However, in the example below, we show that the pair $(A,C)$ has an obvious $\GAQP$ structure.   

\begin{example}
Let $R$ be a commutative unital ring and let $\twist$ be a discrete $R$-twist over a discrete group $G$ with identity $\id$.
    Consider the identity map $c_G: G\to G$ and define $c_\Sigma := c_G \circ q$. Then $\gtwist$ is a graded discrete $R$-twist over $G$.  Let $A:= \TS$ and $C:= A_R(G^{(0)},q^{-1}(G^{(0)}))= A_R(\{\id\}, q^{-1}(\{\id \})$.  Note that we have \[
     C = A_\id  = A_R(\{\id\}, q^{-1}(\{\id \}))\cong R\{1_{\id}\} \cong R.
    \]
    As the trivial group ring $\{\id\}$ has no nontrivial units,  $\Sz$ satisfies the local bisection hypothesis and hence $(A_\id,C)$ is an AQP. Therefore, by Corollary \ref{iff}, $(A,C)$ is a \GAQP.
\end{example}

\printbibliography

\end{document}